\theoremstyle{definition}
\newtheorem{thm}{Theorem}[section]
\theoremstyle{definition}
\newtheorem{cor}[thm]{Corollary}
\newtheorem{prop}[thm]{Proposition}
\newtheorem{lem}[thm]{Lemma}
\newtheorem{defn}[thm]{Definition}
\newtheorem{term}[thm]{Terminology}
\newtheorem{rem}{Remark}[section] 
\def\C{{\mathbb C}}
\def\Z{{\mathbb Z}}
\def\D{{\mathbb D}}
\def\Q{{\mathbb Q}}
\def\F{{\mathbb F}}
\def\cO{{\mathcal O}}
\DeclareMathOperator{\End}{End}
\DeclareMathOperator{\Hom}{Hom}
\DeclareMathOperator{\sups}{ss}
\DeclareMathOperator{\tor}{tor}
\DeclareMathOperator{\et}{et}
\def\G{{\mathcal G}}
\def\F{{\mathbb F}}
\def\G{{\mathscr G}}
\def\F{{\mathbb F}}
\def\p{{\mathfrak p}}
\DeclareMathOperator{\ord}{ord}
\newcommand\isomto{\stackrel{\sim}{\smash{\longrightarrow}\rule{0pt}{0.4ex}}}
\begin{document}
\title[]{Abelian varieties with  Real multiplication\,: \\ classification and isogeny classes over finite fields}
\author{Tejasi Bhatnagar, Yu Fu}
\maketitle
\vspace{-1cm}
\begin{abstract}
In this paper, we provide a classification of certain points on Hilbert modular varieties over finite fields under a mild assumption on Newton polygon. Furthermore, we use this characterization to prove estimates for the size of isogeny classes. 

\end{abstract}

\tableofcontents
\section{Introduction}

\noindent Throughout this paper, $\F_q$ will denote the finite field with $q$ elements where $q$ is a power of an odd prime $p$. We denote by $k$ the algebraic closure of $\F_q$. In \cite{De69}, Deligne provides a classification of ordinary abelian varieties over finite fields in terms of certain $\Z$-modules along with an action of a ``Frobenius map". A crucial step in this classification requires the existence of the ``Serre-Tate canonical lift" of an ordinary abelian variety to characteristic zero. Using the ``canonical lift" of an ordinary abelian variety, Deligne provides a characterisation of such abelian varieties by associating to them the integral homology of their lift. A similar classification for simple almost ordinary abelian varieties over finite fields is proved in \cite{OS20}. In order to extend Deligne's work, Oswal and Shankar construct canonical lift(s) of almost ordinary abelian varieties using Grothendieck-Messing theory. Moreover, the lifts are characterized by the property that \textit{all} of the endomorphisms lift to characteristic zero. Using these canonical lift(s),  they classify simple almost ordinary abelian varieties, and as an application, they give a lower bound of the size of almost-ordinary isogeny classes. This paper  aims to generalise their work to certain abelian varieties, (not necessarily simple) with \textit{real multiplication} over finite fields with some assumptions on $p$, as described below. (See Theorem \ref{mainthm})

\subsubsection{The Hilbert moduli space}\noindent Let $L$ be a totally real number field of degree $g$ over $\Q$. Let $\cO_L$ be its ring of integers. We denote by $\mathcal{H}_{L,\mathfrak{a}}$ the coarse Hilbert moduli space over $\F_q$ associated to $L$. This moduli space parametrises abelian varieties with real multiplication with additional data. More precisely, we describe the moduli problem as follows.
\begin{defn}
Fix a fractional ideal $\mathfrak{a}$ of $L$. Define $\mathcal{H}_{L,\mathfrak{a}}$ to be the functor such that for any $\F_{q}$-scheme $S$, $\mathcal{H}_{L, \mathfrak{a}}(S)$ is the set of isomorphism classes of triples,$(A, \iota, \lambda)$ where:

\begin{itemize}
\item[(1)] $A \to S$ is a $g$-dimensional abelian scheme over $S$;
\item[(2)] $\iota: \cO_L\hookrightarrow \End A$ is an embedding into the endomorphism ring;
\item[(3)] $\lambda:\mathfrak{a} \isomto \operatorname{Hom}_{\cO_{L}, S}^{\operatorname{Sym}}(A, A^{\vee}) $ is an isomorphism of $\cO_{L}$ modules. Here $\lambda$ identifies the set of polarizations with the set of totally positive element $\mathfrak{a}^{+} \subset \mathfrak{a}$. For every $a \in \mathfrak{a}^{+}$, $\lambda(a)$ is an $\mathcal{O}_{L}$-linear polarization of $A$, and the homomorphism $A \otimes_{\mathcal{O}_{L}} \mathfrak{a} \isomto  A^{\vee}$ induced by $\lambda$ is an isomorphism of abelian schemes.
\end{itemize} 

\end{defn}
\noindent See \cite[Chapter 3.6.1]{Goren} for a complete description of the moduli problem. For the purpose of this paper, we will work with the coarse moduli space and classify $\mathfrak{a}$-polarised abelian varieties with RM, for a fixed fractional ideal $\mathfrak{a}$ of $L.$ In particular, when the polarisations are principal, the corresponding fractional ideal class is the inverse different of $L$, denoted by $\mathcal{D}_{L/\Q}^{-1}$. We restrict to this case while counting the size of isogeny classes.

\begin{defn}
 Two abelian varieties $(A,\lambda_A, \iota_A)$ and $(B,\lambda_B, \iota_B)$ in $\mathcal{H}_{L,\mathfrak{a}}(\F_q)$ are said to be  \textit{isogenous} if there is an isogeny $\varphi: A\rightarrow B$ over $\overline\F_q$ that is compatible with the action of $\cO_{L}$, that is $\varphi \circ \iota_A = \iota_B \circ \varphi$. Moreover, the isogeny preserves polarization up to  scaling by $L^{\times}$. 
 \end{defn}
 \begin{rem}
From this section onwards we fix a totally real field $L$ of degree $g$ over $\Q$ and a fractional ideal $\mathfrak{a}$ of $L$. When we say that an abelian variety $A$ over $\F_q$ has RM, we mean that $A$ over $\F_q$ is $\mathfrak{a}$-polarised and has RM by $\mathcal{O}_L.$ That is, $A$ is an $\F_q$-point of $\mathcal{H}_{L,\mathfrak{a}}.$
\end{rem}
\subsubsection{The main theorem: classification }\noindent Let $A$ be a geometrically simple $\F_q$-point of $\mathcal{H}_{L,\mathfrak{a}}$.  We assume that the $p$-rank\footnote{We recall that the \textit{$p$-rank} of an abelian variety is defined to be the integer $f=\operatorname{dim}_{\F_{p}}\Hom(\mu_{p}, A[p])$.} of $A$ is $g-a$ for some $0\leq a \leq g-1.$ Furthermore, we assume that $p$ is totally split in $\cO_L$. This assumption guarantees that the $p$-divisible group splits into either ordinary or local-local factors (see Lemma \ref{ordll}), implying that the endomorphism algebra of $A$ is, in fact, a CM field $K.$ Consequently, the extension $K\otimes \Q_p$, splits as $K_{\ord}\oplus K_{\sups}$ where $K_{\ord}$ is a direct sum of $2(g-a)$ factors of $\Q_p$, while $K_{\sups}$ splits as a sum of $a$ quadratic extensions of $\Q_p$. We write $K_{\sups} =  \oplus_{1\leq i\leq a} K^i_{\sups}$. 

\begin{term} We call the abelian variety $A$ (or its isogeny class) \textit{totally ramified}, if all the factors of $K_{\sups}$ are ramified, \textit{inert} if all the factors are inert, and \textit{ramified} if some of the factors are inert and some are ramified. We note that the above terminology extends to non-simple abelian varieties as well, and this result is true in generality, thus giving us a classification for RM abelian varieties with non-commutative endomorphism ring as well.

\end{term}

\noindent  In what follows, we denote by $\mathcal{C}_h$ the category of abelian varieties over $\F_q$ with RM such that the characteristic polynomial of their relative Frobenius is $h$. Its objects are $\F_q$-points of $\mathcal{H}_{L,\mathfrak{a}}$, see \ref{catdef} for a precise definition. The morphisms in this category are morphisms of abelian varieities over $\F_q$ that are compatible with the action of $\mathcal{O}_L$ and preserve polarisations up to scaling by $L^{\times}$. We classify the objects of $\mathcal{C}_h$ in terms of certain $\Z$-modules with a ``Frobenius" map and an action by $\mathcal{O}_L$. These objects along with the defined morphisms form a category of \textit{Deligne modules with RM} which we  denote by $\mathcal{L}_h.$ See \ref{dmodrm} for the precise definition. We prove the following classification result in this paper.

\begin{thm}\label{mainthm}
Let $\mathcal{C}_h$ and $\mathcal{L}_h$ be defined as above. We assume that the $p$-rank of abelian varieties in $\mathcal{C}_h$ equals to $g-a$ for some $0\leq a\leq g-1.$  

\begin{enumerate}
    \item If $\mathcal{C}_h$ is totally ramified, then there exists  $2^a$ canonical functors $\mathcal{F}_i: \mathcal{C}_h\rightarrow \mathcal{L}_h$ with $1\leq i\leq 2^a.$ Each of these functors induce equivalences between two categories.  
    \item If $\mathcal{C}_h$ is inert, then there are $2^a$ full subcategories $\mathcal{C}_{h,i}$ of $\mathcal{C}_{h}$ where $1\leq i\leq 2^a.$ The functors $\mathcal{F}_i$ from each of these subcategories to $\mathcal{L}_h$ induce an equivalence between the categories. 
\item If $\mathcal{C}_h$ is ramified,  analogous to $(2)$, then there are $2^{a-k}$ subcategories of $\mathcal{C}_{h}$ where $k$ is the number of inert factors of $K_{\sups}.$ The functors $\mathcal{F}_i$ from each of these subcategories to $\mathcal{L}_h$ for $1\leq i\leq 2^{a-k}$, induce an equivalence between the categories. 
\end{enumerate}

\end{thm}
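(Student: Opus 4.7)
The plan is to follow the blueprint of Deligne's ordinary classification and the Oswal--Shankar almost ordinary extension, adapted to the RM setting. The starting point is the decomposition supplied by Lemma \ref{ordll}: since $p$ splits totally in $\cO_L$, we have a canonical splitting
\[
A[p^\infty] = \bigoplus_{i=1}^{g} A[\mathfrak{p}_i^\infty],
\]
in which each factor is a $p$-divisible group of height $2$ and dimension $1$; by hypothesis $g-a$ of these summands are ordinary and $a$ are local--local. We will construct, for each $A\in\mathcal{C}_h$, a family of canonical lifts $\tilde A$ to $W(k)$, characterised by the property that the full endomorphism ring, the $\cO_L$-action, and the $\mathcal{D}_{L/\Q}^{-1}$-polarization all lift. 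Fixing an embedding $W(k)\inj\C$ and taking $H_{1}(\tilde A_{\C},\Z)$ then produces a Deligne module with RM, and this assignment is the candidate functor $\mathcal{F}_i$.

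For the construction of lifts we proceed summand by summand. The ordinary summands admit a unique Serre--Tate canonical lift. Each local--local summand $A[\mathfrak{p}_i^\infty]$ is analysed via Grothendieck--Messing theory: deformations of its Dieudonn\'e module are classified by lifts of the Hodge filtration, and the requirement that the rational endomorphism field $K^i_{\sups}$ act on the lift cuts out a finite set of choices. When $K^i_{\sups}/\Q_p$ is ramified, both embeddings of $\cO_{K^i_{\sups}}$ into the local endomorphism algebra lift the given reduction, yielding two genuine canonical lifts per factor; multiplying over the $a$ local--local factors produces the $2^a$ functors of case (1). When $K^i_{\sups}/\Q_p$ is inert, only one of the two embeddings is compatible with the given $\F_q$-rational structure on $A$, so the choice of lift is constrained by $A$ itself; partitioning the objects of $\mathcal{C}_h$ according to which of the $2^a$ formal choices is actually realised produces the full subcategories $\mathcal{C}_{h,i}$ of case (2). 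Case (3) is the evident mixture of these two mechanisms and accounts for the $2^{a-k}$ subcategories when exactly $k$ of the factors are inert. That the polarization lifts automatically is a consequence of the self-duality of the canonical Hodge filtration with respect to the Weil pairing.

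To see that each $\mathcal{F}_i$ is an equivalence, the key inputs are Tate's theorem for abelian varieties over finite fields, which identifies $\Hom_{\F_q}(A,B)\otimes\Z_\ell$ with $\cO_L\otimes\Z_\ell$-linear Frobenius-equivariant maps on Tate modules for $\ell\ne p$, and Grothendieck--Messing together with the characterisation of the canonical lift by endomorphism lifting at the prime $p$. Essential surjectivity proceeds by inverting the construction: given a Deligne module with RM and characteristic polynomial $h$, we produce a complex $\mathfrak{a}$-polarised RM abelian variety via the standard Shimura construction, descend it to a number ring, spread out, and reduce modulo $p$, then check that the reduction lies in $\mathcal{C}_h$ and that its $i$-th canonical lift recovers the given Deligne module. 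The main technical obstacle we anticipate lies in the bookkeeping for the inert and mixed cases: one must verify that the $\mathcal{C}_{h,i}$ are genuinely full subcategories (that is, that no morphism in $\mathcal{C}_h$ crosses from one $\mathcal{C}_{h,i}$ to another) and that the selection rule attached to each $A$ is stable under isogenies preserving the $\cO_L$-action. After reducing to the local picture at $p$, this should come down to a compatibility of the Frobenius with the two $\cO_{K^i_{\sups}}$-embeddings of the local Dieudonn\'e module, which becomes tractable once the Grothendieck--Messing deformation description is in place.
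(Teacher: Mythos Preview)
Your overall architecture---decompose via Lemma \ref{ordll}, build canonical lifts summand by summand using Grothendieck--Messing, take integral homology of the lift---matches the paper. But there is a genuine gap in how you handle the inert case, and your essential surjectivity argument differs from the paper's.

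\textbf{The inert partition.} Your description of the subcategories $\mathcal{C}_{h,i}$ is not right. You say the partition records ``which of the $2^a$ formal choices is actually realised'' by $A$; but in the inert case each $A$ has a \emph{unique} canonical lift, so there is no choice attached to $A$ alone. The obstruction is to \emph{functoriality}: if $\varphi:A\to B$ is an $\F_q$-isogeny and $\widetilde A$, $\widetilde B$ are the canonical lifts, it is not automatic that $\varphi$ lifts to $\widetilde\varphi:\widetilde A\to\widetilde B$. The paper's mechanism is concrete and different from what you propose: one declares $A\sim B$ when every $\F_q$-isogeny $A\to B$ has each supersingular component of its kernel of order an \emph{even} power of $p$. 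The point is that in the unramified case the uniformiser of $\cO^i_{\sups}$ is $p$ itself, so only subgroups of order $p^{2r}$ arise as kernels of liftable endomorphisms; an odd-order supersingular kernel does not lift compatibly with both canonical lifts. This parity criterion is what carves out $2^a$ full subcategories, and it is the missing idea in your sketch. Your anticipated ``bookkeeping'' paragraph gestures at the right worry but with the wrong invariant.

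\textbf{Functoriality in the totally ramified case.} Relatedly, you do not explain why, after fixing one lift $\widetilde A$, every finite flat subgroup $G\subset A$ lifts canonically to $\widetilde G\subset\widetilde A$. The paper's argument (Proposition \ref{liftsubgrp}) uses that each $\G^i_{\sups}$ is one-dimensional, hence has a unique subgroup of each $p$-power order, and that in the ramified case this subgroup is $\ker(\overline\omega^{\,r})$ for the uniformiser $\overline\omega$ of $\cO^i_{\sups}$; since $\overline\omega$ lifts, so does the subgroup. This is what makes the assignment $A\mapsto T(A)$ into a functor once a lift of one object is fixed.

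\textbf{Essential surjectivity.} Your route---build a complex RM abelian variety from the Deligne module, descend, spread out, reduce---could in principle be made to work but is heavier than necessary. The paper instead invokes Honda--Tate to produce directly over $\F_q$ a simple $A$ with the correct Frobenius polynomial, then locates the desired object inside its $\F_q$-isogeny class by taking a quotient by the subgroup corresponding to $T/T(A)$, lifting that subgroup via the same canonical-lift machinery (Proposition \ref{surjfuct}). This avoids all descent and spreading-out issues.
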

\subsection{Strategy to prove the classification theorem.}  We first prove that $A$ lifts to characteristic zero along with all its endomorphism algebra. More precisely, when $A$ is totally ramified, it has exactly $2^a$ lifts to characteristic zero. When $A$ is inert, then it has one canonical lift, and finally, in the case when $A$ is ramified, its number of lifts depend on the number of ramified quadratic extensions in the direct sum $K_{\sups}$. Fix a lift $\widetilde{A}$ of $A$, we associate it with the integral homology $H_1(\widetilde{A}\otimes_{i} \C, \Z)$. The Frobenius map on the homology lattice comes from the lift of the Frobenius of $A.$ This gives us a functor from the category of RM-abelian varieties to the category of $\Z$-modules with a ``Frobenius" map. 
When all is said and done, however, there is no unique choice of a functor between the categories $\mathcal{C}_h$ and $\mathcal{L}_h$. As in \cite{OS20}, this ambiguity comes from possible CM-types on $K.$ Suppose $\sigma_1,\sigma^{\prime}_1, \sigma_2,\sigma^{\prime}_2,\dots \sigma_a,\sigma^{\prime}_a$ are the embeddings corresponding to the slope $1/2$ part of $a,$ then to give a possible CM type on $K$, we could choose either one of $\sigma_i$ or $\sigma_i^\prime$ for $1\leq i\leq a$ together with the embeddings corresponding to the slope $0$ part.
\noindent In each of the cases, this relates to the functoriality of the association between the two categories. For example, in the totally ramified case, we get a functorial map between $\mathcal{C}_h$ and 
$\mathcal{L}_h$ once we choose a particular lift of $A$ out of $2^a$ possibilities. On the other hand, in the inert case, even though we have one canonical lift of $A,$ the association of $A$ with the integral homology of its lift is not functorial. Once we restrict to one of $2^a$ subcategories of $\mathcal{C}_h$ described in section \ref{3}, we get a functorial association. 
\vspace{2mm}

\subsection{Application to the size of isogeny classes.} The classification point of view of abelian varieties over finite fields has been utilised in many directions, many computational in nature. For example, Howe in \cite{howe} studies polarisation  building on Deligne's work to show that there exists a principally polarised abelian variety in certain isogeny classes. In \cite{ST18}  Shankar and Tsimerman
 formulate and give evidence of certain conjectures regarding intersections of irreducible varieties and isogeny classes  in $\mathcal{A}_g$, the moduli space of principally polarised abelian varieties. One of the applications of classification results  to answer such questions is to estimate size of isogeny classes. 
 
\noindent In this section, using the characterization in Theorem \ref{mainthm}, we estimate the size of isogeny classes of a $\F_{q}$ point, $A \in \mathcal{H}_{L,\mathfrak{a}}(\F_{q})$ of the Hilbert modular variety. We define $I(A, q^n)$ to be the set of abelian varieties in $\mathcal{H}_{L,\mathfrak{a}}(\F_{q^{n}})$ such that there exists an isogeny to $A$ over $\overline\F_{q}$. Denote by $N(A, q^{n})$  the size of the set $I(A, q^n)$.  We prove the following estimate for $N(A, q^{n})$.
\begin{thm}\label{isogsize}
For all but finitely many $n$, we have 
$N(A, q^{n}) = q^{\frac{n}{2}(g-a+o(1))}$.
\end{thm}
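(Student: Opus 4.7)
The plan is to combine the categorical equivalence of Theorem~\ref{mainthm} with analytic estimates in the spirit of Brauer--Siegel, extending the strategy of~\cite{OS20} from the almost ordinary setting to general $p$-rank $g-a$. First, Theorem~\ref{mainthm} provides a bijection, up to a bounded factor of at most $2^a$, between the isogeny class $I(A,q^n)$ and the set of isomorphism classes of $\mathcal{D}_{L/\Q}^{-1}$-polarized Deligne modules $(T,F)$ with RM whose Frobenius characteristic polynomial equals $h_n$, the characteristic polynomial of $\pi_A^n$.

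Next, I identify these modules with ideal classes. Since the endomorphism algebra of $A$ is the CM field $K = L(\pi_A)$, the rational module $T\otimes\Q$ is free of rank one over $K$, so $T$ is a proper fractional ideal of some order $\mathcal{O}$ with $\mathcal{O}_L[\pi_A^n,\bar\pi_A^n]\subseteq\mathcal{O}\subseteq\mathcal{O}_K$. Counting such $T$ up to isomorphism and incorporating the polarization datum yields
\begin{equation*}
N(A,q^n) \;=\; c_A \sum_{\mathcal{O}_L[\pi_A^n,\bar\pi_A^n]\,\subseteq\,\mathcal{O}\,\subseteq\,\mathcal{O}_K} |\Pic(\mathcal{O})|,
\end{equation*}
where $c_A$ absorbs the $2^a$ classification ambiguity together with a finite correction for the narrow-class constraint from the $\mathcal{D}_{L/\Q}^{-1}$-polarization.

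I then estimate the Picard numbers. By the Brauer--Siegel theorem applied to $K$ (whose regulator is $n^{o(1)}$ since $K$ is CM) together with the standard comparison between the class group of a non-maximal order and that of $\mathcal{O}_K$, one has $|\Pic(\mathcal{O})| = |\disc(\mathcal{O})|^{1/2+o(1)}$ for a density-one set of $n$. The number of orders between $\mathcal{O}_L[\pi_A^n,\bar\pi_A^n]$ and $\mathcal{O}_K$ is bounded by the divisor function applied to the conductor, which is at most $q^{o(n)}$, and larger orders have strictly smaller discriminants, so the sum is dominated by the minimal order. A direct computation gives $\disc(\mathcal{O}_L[\pi_A^n]/\mathcal{O}_L) = (\pi_A^n-\bar\pi_A^n)^2$; its norm to $\Z$ is controlled by the $g-a$ pairs of embeddings coming from ordinary factors of $K\otimes\Q_p$ (each contributing a factor of size $\sim q^n$), while the $a$ supersingular pairs contribute strictly less because $\pi_A^n$ and $\bar\pi_A^n$ are $p$-adically close in each $K^i_{\sups}$. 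This yields $|\disc|^{1/2} = q^{n(g-a)/2+o(n)}$, which gives the theorem.

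The principal obstacle is the Brauer--Siegel step: the bound $|\Pic(\mathcal{O})| = |\disc(\mathcal{O})|^{1/2+o(1)}$ fails in the presence of exceptional Siegel zeros, forcing the removal of a density-zero set of $n$ and preventing an absolute $O(1)$ error term. A secondary technical task is the uniform $p$-adic discriminant computation across the three cases (totally ramified, inert, mixed) of the supersingular part, where the valuation of $\pi_A^n-\bar\pi_A^n$ in each factor $K^i_{\sups}$ must be bounded consistently. Both obstacles are of analytic-number-theoretic nature rather than geometric, and can be handled by standard tools once the categorical equivalence of Theorem~\ref{mainthm} is in place.
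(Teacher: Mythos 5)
Your overall strategy is the same as the paper's: reduce the count to class numbers of orders in $K$ containing $\cO_L$, $\pi_A^n$ and $q^n/\pi_A^n$, estimate these via Brauer--Siegel through discriminants, and control the sum over over-orders. But there is a genuine error at the central discriminant step. The norm to $\Z$ of $\disc(\cO_L[\pi_A^n]/\cO_L)=(\pi_A^n-\overline{\pi}_A^n)^2$ is the product over \emph{all} $g$ archimedean embeddings of $L$, and each factor has absolute value $4q^n\sin^2(n\theta_j)$ --- including those at places where $K\otimes\Q_p$ is supersingular. The $p$-adic closeness of $\pi_A^n$ and $\overline{\pi}_A^n$ in each $K^i_{\sups}$ does not shrink this archimedean norm; it only forces the integer $N_{L/\Q}\bigl((\pi_A^n-\overline{\pi}_A^n)^2\bigr)$ to be divisible by a large power of $p$. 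Hence $|\disc(\cO_L[\pi_A^n,\overline{\pi}_A^n])|=q^{n(g+o(1))}$, not $q^{n(g-a)+o(n)}$ (this is exactly Lemma \ref{4.3}), and your ``dominant minimal order'' would yield $N(A,q^n)\approx q^{ng/2}$, which is too large. The correct mechanism, and the paper's, is this: by Proposition \ref{endcom} the local endomorphism ring at each supersingular place is always the maximal order $\cO_{\sups}$, so $\cO_L[\pi_A^n,\overline{\pi}_A^n]$ never occurs as an endomorphism ring. The minimal occurring order is $R_n$, generated by $\cO_L[\pi_A^n,q^n/\pi_A^n]$ together with $\cO_{\sups}$ at $p$, and it is the local index $[R_n\otimes\Z_p:\cO_L[\pi_A^n]\otimes\Z_p]=q^{na/2+o(n)}$ whose square divides the discriminant down to $q^{n(g-a)+o(n)}$ (Lemma \ref{lowerboundclassgroup}). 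Your two mistakes cancel numerically, but as written the lower bound is unjustified, since $\Pic(\cO_L[\pi_A^n,\overline{\pi}_A^n])$ parametrizes no actual abelian varieties in the isogeny class.

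A secondary gap: the identity $N(A,q^n)=c_A\sum_{\cO}|\Pic(\cO)|$ and the bound ``number of over-orders $\le$ divisor function of the conductor $=q^{o(n)}$'' both require that every fractional $R_n$-ideal be invertible over its multiplicator ring and that $\cO_K/R_n$ be cyclic, i.e.\ that $R_n$ be a Bass order. This is not automatic; it is where the paper invests effort (Lemma \ref{lemmagor}, Corollary \ref{gor}, Lemma \ref{bass}), using crucially that $R_n$ contains the Dedekind domain $\cO_L$ with $[K:L]=2$. Without cyclicity, the number of over-orders is bounded only by the number of subgroups of the rank-$\le 2g$ group $\cO_K/R_n$, which can be as large as a positive power of $|\cO_K/R_n|$, i.e.\ $q^{cn}$ for some $c>0$, which would destroy the upper bound. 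Finally, absorbing the polarization constraint into a constant $c_A$ needs an argument that the relevant narrow class group is bounded independently of $n$ (here $R_n^+=\cO_L$, Proposition \ref{norm map}) and that a $\mathcal{D}_{L/\Q}^{-1}$-polarized member with endomorphism ring exactly $R_n$ exists (Proposition \ref{polarizationRn}); neither is addressed in your sketch.
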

We note that we get the leading term for all but finitely many $n$, while the error is an asymptotic as $n$ goes to infinity. 

\noindent In \cite {AC02}, Achter and Cunningham prove a precise formula for the size of isogeny classes of ordinary abelian varieties with RM. Their paper uses a different technique of orbital integrals. The estimate in \cite[Theorem 3.1]{AC02} agrees with our Theorem \ref{isogsize}, when the $p$-rank of the abelian variety is $g$, that is, for ordinary abelian varieties with RM.

\subsection{Strategy to prove the bounds for the size of the isogeny classes.}
 In order to find the lower bound for $N(A, q^{n})$, we count the number of  points in $\mathcal{H}_{L,\mathfrak{a}}(\F_{q^{n}})$ with endomorphism by the smallest order $R_n\subset   \mathcal{O}_{K}$ that occurs as an endomorphism ring in the isogeny class of $A.$  We characterize this ring in proposition \ref{endcom}. The classification theorem gives us a bijection between $\mathcal{C}\ell(R_n)$ and the  $\F_{q^n}$ points  of $\mathcal{H}_{L,\mathfrak{a}}$ with endomorphism ring $R_n$. Finding a lower bound therefore reduces to estimating the size of the class group. Furthermore, in section \ref{upperbound}, we prove that this bound is sharp.  We compute the upper bound by estimating size of the class group of \textit{all} over orders of $R_n.$ We note that $R_n$ and all its over orders are Gorenstein, a property that gives a unique characterization of its over-orders in terms of the subgroups of $\cO_K/R_n$, a cyclic group in our case (see Proposition \ref{upbound}). 


\noindent\subsection{Organization of the paper} Our first step is to construct canonical lift(s) of abelian varieties with RM in section $2$. We then give the main theorem of classification in section $3$, and finally, section $4$ provides estimates for the size of isogeny classes.

\section{Acknowledgements.} 
\noindent  Our approach owes a substantial intellectual debt to the work of Oswal and Shankar. We are very grateful to Ananth Shankar for introducing this question to us and for his help throughout the project. We wish to thank Nathan Kaplan and Ziquan Yang for helpful conversations. We thank Jordan Ellenberg for many useful comments on the introduction. We were partially supported by NSF grant DMS-2100436.
\section{Canonical lift}
\subsection{The simple case.} For this section, we fix a geometrically simple $g$-dimensional abelian variety $A$ over $\F_q$ that has RM by $\cO_L.$ Let $g-a$ be the $p$-rank of $A$ where $1\leq a\leq g-1.$ We assume that $p$ is totally split in $\cO_L.$  Let $A[p^{\infty}]$ be the $p$-divisible group of $A.$ 
Let $\End^{0}(A)$ be the endomorphism algebra of $A$ defined over $\F_q$. Let $K=\Q(\pi)$ be the field generated by the relative Frobenius of $A$ over $\F_q$. We denote by $W(\F_q)$, the ring of Witt vectors over $\F_q$ and let $W=W(k).$

\noindent Our first step is to prove that the endomorphism algebra of $A$ is commutative. To that end, we state a useful lemma that describes the decomposition of $A[p^{\infty}]$ over $k$ based on the splitting of $p.$
   
\begin{lem}\label{ordll}\cite[Lemma 5.2.1]{AG04} Let $x\in \mathcal{H}_{L,\mathfrak{a}}(k)$. Then its $p$-divisible group
 $\G_x$ decomposes as product of $p$-divisible groups with factors $\G_{x,\p}$ corresponding to the primes lying above $p$ in $\cO_L$. The  Dieudonn\'e module of $\G_{x,\p}$  is an $\cO_{L_{\p}}\otimes W(k)$ module where $\cO_{L_\p}$ is the completion of $\cO_L$ at $\mathfrak{p}.$
Corresponding to each prime $\p$, the associated Dieudonn\'e module is either ordinary or local-local. Moreover, the Dieudonn\'e module of each local-local factor is free of rank two. 
\end{lem}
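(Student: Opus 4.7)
The plan is to exploit the compatibility between the $\cO_L$-action and the $p$-divisible group structure: the totality of the splitting of $p$ in $\cO_L$ cuts the Dieudonn\'e module into $g$ manageable pieces, and the Rapoport condition on the moduli problem together with the polarization then pins down the shape of each piece.

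First, I would set up the product decomposition. Since $p\cO_L = \prod_{\p\mid p}\p$ with each residue degree and ramification index equal to one, we have $\cO_L\otimes_{\Z}\Z_p\cong\prod_{\p\mid p}\cO_{L,\p}\cong\prod_{i=1}^{g}\Z_p$. The $\cO_L$-action on $A$ extends to an action of this product ring on $\G_x$, and its orthogonal idempotents yield a canonical factorisation $\G_x=\prod_{\p\mid p}\G_{x,\p}$. On Dieudonn\'e modules this reads $D(\G_x)=\bigoplus_{\p\mid p}D(\G_{x,\p})$ with each $D(\G_{x,\p})$ naturally a module over $\cO_{L,\p}\otimes W(k)\cong W(k)$.

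Next, to pin down the rank of each summand, I would invoke the Rapoport (Deligne--Pappas) condition that is built into any point of $\mathcal{H}_{g,1}$, namely that $\Lie(A)$ is locally free of rank one over $\cO_L\otimes k$. Via crystalline Dieudonn\'e theory this upgrades to the assertion that $D(\G_x)$ is free of rank two over $\cO_L\otimes W(k)$; combined with the previous step, each $D(\G_{x,\p})$ is then free of rank two over $W(k)$, so that $\G_{x,\p}$ has height two.

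Finally, I would use the polarization $\lambda$ to deduce self-duality of each piece. Because $L$ is totally real and $\lambda$ is $\cO_L$-linear, the induced Rosati involution restricts to the identity on $\cO_L$, and hence the duality $\G_x\to\G_x^{\vee}$ preserves (rather than permutes) the indexing of $\G_x$ by primes above $p$. Consequently each $\G_{x,\p}$ is, up to an $\cO_{L,\p}^{\times}$-twist, isomorphic to its own Serre dual, which forces its Newton slopes to be symmetric about $1/2$. A height-two $p$-divisible group over $k$ with slopes symmetric about $1/2$ is either ordinary (slopes $0,1$) or local-local supersingular (slopes $1/2,1/2$), yielding the claimed dichotomy.

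The main obstacle, as I see it, is the rank computation: to guarantee that no prime $\p$ absorbs more of the total rank of $D(\G_x)$ than its fair share, one genuinely needs the Rapoport/Kottwitz determinant condition rather than just the naive dimension count, since without it the moduli problem fails to be flat in the relevant cases and the lemma can break. Once freeness over $\cO_L\otimes W(k)$ is secured, the remaining arguments are formal consequences of the classification of height-two $p$-divisible groups over an algebraically closed field.
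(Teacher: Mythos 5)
Your argument is correct, and it is essentially the standard proof of this statement; the paper itself offers no proof at all, simply citing \cite[Lemma 5.2.1]{AG04}, so you have supplied the argument that the reference carries out. Your three steps (idempotent decomposition of $\cO_L\otimes\Z_p$, the Rapoport/Deligne--Pappas condition to get freeness of rank two of the Dieudonn\'e module over $\cO_L\otimes W(k)$, and the slope analysis of each height-two factor) are exactly the right ingredients, and you correctly identify the Rapoport condition as the non-formal input: without it nothing prevents one prime from absorbing more than its share of $\Lie(A)$, and it is precisely condition (3) of the moduli problem (which, for $p$ unramified in $L$, implies the Rapoport condition) that rules this out. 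Two minor remarks. First, once each factor $\G_{x,\p}$ is known to have height two and dimension one, the dichotomy ``ordinary or local-local'' already follows from the Dieudonn\'e--Manin classification (the slopes sum to the dimension and must satisfy the usual denominator constraint, forcing $\{0,1\}$ or $\{1/2,1/2\}$), so the polarization step is a pleasant cross-check rather than a logical necessity; what the polarization genuinely buys is the equality $\dim\G_{x,\p}=\dim\G_{x,\p}^{\vee}$, which you can fold into the rank computation. Second, the polarization only makes each $\G_{x,\p}$ \emph{isogenous} to its Serre dual, not isomorphic up to a unit twist, but isogeny suffices since Newton slopes, connectedness, and connectedness of the dual are all isogeny invariants.
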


\noindent Let $R$ and $S$ be the endomorphism rings of $A$ and $A[p^{\infty}]$ respectively over $\F_q.$ We know by Tate's theorem that $R\otimes \Z_p =S$. 
\begin{prop}\label{endcom}
The endomorphism ring $R$ and (hence) the $\Z_p$-algebra $S$ is commutative.  Moreover, $S$ admits a decomposition over $\F_q$ into $S_{\text{\'et}}\oplus S_{\tor}\oplus S^i_{\sups}$ for $1\leq i\leq a$ where $S^i_{\sups}$ is rank two $\Z_p$-algebra. Further, for each such $ i,$ the algebra $S^i_{\sups}$ is maximal in its field of fractions.
\end{prop}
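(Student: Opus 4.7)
The plan is to leverage the total splitting of $p$ in $\mathcal{O}_L$ to decompose $A[p^\infty]$ already over $\F_q$ and compute the endomorphism ring of each piece. Because $\mathcal{O}_L \otimes_{\Z} \Z_p \simeq \prod_{\p \mid p} \Z_p$ holds as $\Z_p$-algebras, its orthogonal idempotents sit inside $S = R \otimes \Z_p$ by Tate's theorem, and they produce an $\F_q$-decomposition $A[p^\infty] = \bigoplus_{\p \mid p} A[\p^\infty]$, so that
\[
S \;=\; \prod_{\p \mid p}\End_{\F_q}(A[\p^\infty]).
\]
By Lemma \ref{ordll}, each height-$2$ factor $A[\p^\infty]$ is either ordinary or local-local.

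For an ordinary factor, the connected-étale sequence splits canonically over the perfect field $\F_q$ into a multiplicative piece $M_\p$ and an étale piece $E_\p$, each of height $1$. Since there are no nonzero homomorphisms between multiplicative and étale $p$-divisible groups, $\End_{\F_q}(A[\p^\infty]) = \End_{\F_q}(M_\p) \oplus \End_{\F_q}(E_\p) = \Z_p \oplus \Z_p$; bundling the multiplicative and étale summands across all ordinary primes yields the factors $S_{\tor}$ and $S_{\ord}$. For a local-local factor, $\pi_\p$ acts on the rank-$2$ Dieudonné module with both Newton slopes equal to $1/2$; since $q$ is an odd power of $p$, the characteristic roots of $\pi_\p$ have non-integer $p$-adic valuation, so $\pi_\p \notin \Q_p$ and $K^i_{\sups} := \Q_p(\pi_\p)$ is a genuine quadratic extension of $\Q_p$. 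Over $k$, the endomorphism ring of a supersingular height-$2$ $p$-divisible group is the maximal order $\mathcal{O}_{B_p}$ of the quaternion division algebra $B_p/\Q_p$; descending to $\F_q$ amounts to taking the centralizer of $\pi_\p$ inside $\mathcal{O}_{B_p}$, and because a quadratic subfield of $B_p$ is its own centralizer in $B_p$,
\[
S^i_{\sups} \;=\; \mathcal{O}_{B_p} \cap K^i_{\sups} \;=\; \mathcal{O}_{K^i_{\sups}},
\]
the maximal $\Z_p$-order of $K^i_{\sups}$, free of rank two over $\Z_p$.

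Assembling these contributions gives the decomposition $S = S_{\ord} \oplus S_{\tor} \oplus \bigoplus_{i=1}^{a} S^i_{\sups}$. Every summand is commutative, so $S$ is commutative; then $R \hookrightarrow R \otimes \Z_p = S$ forces $R$ to be commutative as well. The subtlest point is the supersingular calculation: one needs to verify both that $\End_{\F_q}$ equals the $\pi_\p$-centralizer inside $\End_k$, and that $K^i_{\sups} \cap \mathcal{O}_{B_p}$ is really the full ring of integers of $K^i_{\sups}$—the latter because integrality in the quaternion maximal order restricts to intrinsic integrality on any subfield of $B_p$.
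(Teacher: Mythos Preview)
Your computations of the individual endomorphism rings $\End_{\F_q}(A[\p^\infty])$ are correct, and your route to the maximality of $S^i_{\sups}$---identifying it as $\mathcal{O}_{B_p}\cap K^i_{\sups}=\mathcal{O}_{K^i_{\sups}}$ via the centralizer of $\pi_\p$---is a clean alternative to the paper's isogeny argument (which instead shows that every finite subgroup of a one-dimensional connected $p$-divisible group is automatically stabilized by the maximal order).

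There is, however, a genuine gap in the commutativity part. From the orthogonal idempotents $e_\p\in\mathcal{O}_L\otimes\Z_p\subset S$ you obtain a Peirce decomposition
\[
S=\bigoplus_{\p,\p'} e_\p S e_{\p'},
\]
whose diagonal is indeed $\prod_\p\End_{\F_q}(A[\p^\infty])$. But the displayed equality $S=\prod_\p\End_{\F_q}(A[\p^\infty])$ requires the off-diagonal pieces $e_\p S e_{\p'}=\Hom_{\F_q}(A[\p'^\infty],A[\p^\infty])$ to vanish, i.e.\ the idempotents $e_\p$ to be \emph{central} in $S$. For two supersingular primes $\p\neq\p'$, both factors base-change to the unique height-$2$ supersingular group over $k$, and the cross-Hom over $\F_q$ is the intertwining space $\{\phi\in\mathcal{O}_{B_p}:\phi\,\pi_\p=\pi_{\p'}\,\phi\}$, which is nonzero exactly when $\pi_\p$ and $\pi_{\p'}$ share the same minimal polynomial over $\Q_p$. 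Nothing in your argument excludes this; knowing $\pi_\p\notin\Q_p$ for each $\p$ separately does not prevent two such Frobenii from being conjugate in $B_p^\times$. In effect, ``the $e_\p$ are central in $S$'' is already a form of the commutativity you are trying to establish.

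The paper closes this gap by a different route: it first analyzes the characteristic polynomial $f=r^m$ of Frobenius and rules out $m=2$ by showing that a repeated slope-$\tfrac12$ root would force $\alpha=\pm\sqrt{q}$, contradicting the hypothesis that $A$ is \emph{geometrically} simple. That hypothesis is used essentially there and is conspicuously absent from your argument. Once $m=1$ is known, $\operatorname{rank}_{\Z_p}S=2g$ matches the rank of your diagonal, the off-diagonal terms are forced to vanish, and the rest of your decomposition goes through.
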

\begin{proof}

 We show that $\End^0(A) = K$. Since the endomorphisms by $\cO_L$ are defined over $\F_q$, we get a splitting of the $p$-divisible group and hence of its Dieuodonn\'e module over $\F_q$ into rank two modules.  By Lemma \ref{ordll}, we get a decomposition into  ordinary and supersingular factors $\mathbb{D}_{\ord}\oplus \mathbb{D}_{\sups}$ where $\mathbb{D}_{\sups}$ is a product of $a$ supersingular rank. Since $p$ is completely split in $\cO_L$, we write $\cO_L\otimes \Z_p = \Z_p^{g-a}\times \Z_p^a$, where the $\Z_p^a$ factor acts on $\mathbb{D_{\sups}}$. Choose a basis of idempotents $e_1, e_2, \dots ,e_a$ of $\Z_p^a$. Since $e_1,e_2, \dots, e_a$ are idempotent elements, we see that $\D_{\sups}$ is isomorphic to the direct sum  $e_1\cdot \D_{\sups}\oplus\cdots \oplus e_a\cdot \D_{\sups}$ such that the action of $\Z_p^a$ respects the direct sum. Let $f(x)$ denote the characteristic polynomial of $\pi$ over $\Z$. Since $A$ is geometrically simple, we have that $f(x) = r(x)^{m}$ where $r(x)$ is an irreducible polynomial over $\Q$. We have a decomposition of $f(x) = f_{\text{ord}}(x)f_{\sups}(x)$ over $\Z_p$ corresponding to the decomposition of the Dieudonn\'e module. We note that
$f_{\sups}$ is product of factors $f^i_{\sups}$ for $1\leq i\leq a$. We have a similar decomposition of $r(x)$ as well. We know that the exponent $m$ is either $1$ or $2$ since each $r^i_{\sups}$ is degree $2.$ Suppose that $f(x) = r(x)^2.$ Then each $r^i_{\sups}$ must be a degree $1$ polynomial. Let $\alpha$ be the root of say, $r^1_{\sups}(x).$ Then, $f^1_{\sups}(x) = (x-\alpha)^2$. However, this implies that the minimal polynomial is reducible over $\Q$, that is, $A_{\F_{q^2}}$ is not simple, which is a contradiction. This proves that $\End^0(A)$ equals its center. Therefore, $L$ is the degree $g$ totally real field of $K.$ Let $\iota$ denote the complex conjugation of $K$ over $L.$ Consider the $g-a$ ordinary primes in $\cO_L$ over $p.$ These further split into two different primes $\p$ and $\p^{\prime}$ in $K$ such that $\p^{\prime} = \iota{\p}$. On the other hand, the supersingular primes either stay inert or ramified as they correspond to a degree $2$ extension of $\Q_p.$
This describes the Newton polygon of $A$ and hence gives a decomposition of the $p$-divisible group over $\F_q$ into $\G_{\text{\'et}}\times \G_{\tor}\times \G_{\sups}$. Therefore, we get the corresponding decomposition of $S$ as well. 
Finally, in order to show that $S^{i}_{\sups}$ is maximal, we follow a similar argument given at the end of \cite{OS20}. Without loss of any generality, we do this for $i=1.$ Suppose $\G^{\prime}$ is a $p$-divisible group isogenous to $\G^1_{\sups}$ such that $\End(\G^{\prime})$ is maximal.  Let $j:\G_0\rightarrow \G^1_{\sups}$ be any isogeny. We will show that $\End(\G')$ preserves $\ker j,$ proving that $\End(\G')\subset \End(\G^1_{\sups}).$ Note that  $\G_{0}$ is a one dimensional $p$-divisible group and so for each $r$, it has a unique subgroup of order $p^r$ corresponding to the unique quotient of order $p^r$ of its Dieudonn\'e module. Therefore, $\End(\G^1_{\sups})$ preserves  the kernel of $j$ which is a unique subgroup of order $p^r$ in $\G_0$.
\end{proof}
\noindent Let $W^{\prime}$ denote the rings of integers of a \textit{slightly ramified} extension  $W^{\prime}(1/p)$ of $W(1/p)$. That is, $[W^{\prime}(1/p): W(1/p)]$ is at most $p-1.$ 
The following proposition in \cite{OS20} uses Grothendieck-Messing theory to construct lifts of $A$ to $W^{\prime}$. 
\begin{lem}\label{AAlift}
Let $\G_{\sups}$ denote a one dimensional  $p$-divisible group over $\overline{\F}_p.$ Suppose $\cO\subset \End(\G_{\sups})$ is an integrally closed rank two $\Z_p$-algebra. Then

\begin{enumerate}
    \item If $\cO$ is unramified, there exists a unique lift $\G_{\sups}$ to $W$ such that the action of $\cO$ lifts. 
    \item If $\cO$ is ramified, then there exists two lifts of $\G_{\sups}$ to $W^{\prime}$ such that the action of $\cO$ lifts.
\end{enumerate}
\end{lem}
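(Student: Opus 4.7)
The plan is to apply Grothendieck--Messing deformation theory to reduce the lifting problem to a linear-algebra question on the Dieudonn\'e module of $\G_{\sups}$. Let $M$ denote its covariant Dieudonn\'e module, free of rank two over $W$, and let $\overline{\mathrm{Fil}}\subset M/pM$ be the Hodge line cut out by $\G_{\sups}$. By Grothendieck--Messing, lifts of $\G_{\sups}$ to a $p$-adically complete base $R$ (with a PD-thickening $R\twoheadrightarrow\overline{\F}_p$) correspond bijectively to rank-$1$ $R$-direct summands $\mathrm{Fil}\subset M\otimes_{W}R$ reducing to $\overline{\mathrm{Fil}}$. Since the action of $\cO\subset\End(\G_{\sups})$ on $M$ is $W$-linear, requiring the lifted $p$-divisible group to carry an extension of this action amounts to requiring $\mathrm{Fil}$ to be $\cO$-stable.

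For case (1), the residue field of $\cO$ embeds into $\overline{\F}_p$, so $\cO\otimes_{\Z_p}W\cong W\times W$. The two orthogonal idempotents decompose $M$ as $M_{1}\oplus M_{2}$ into a sum of $W$-lines, and any $\cO$-stable rank-$1$ $W$-summand of $M$ must coincide with one of these. Since $\overline{\mathrm{Fil}}$ is itself $\cO$-stable, it equals the reduction mod $p$ of exactly one of the $M_{i}$, which therefore produces the unique $\cO$-equivariant lift over $W$.

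For case (2), let $\pi$ be a uniformizer of $\cO$ with $\pi^{2}=pu$ for some $u\in\Z_p^{\times}$. The ring $\cO\otimes_{\Z_p}W$ is a DVR totally ramified over $W$, and the $\cO$-stable $W$-submodules of $M$ are precisely the powers of its maximal ideal---none of them a rank-$1$ $W$-direct summand---so no lift can exist over $W$. One instead passes to $W^{\prime}=W[\sqrt{pu}]$, the totally ramified quadratic extension of $W$ (permissible since $p$ is odd, so $2\le p-1$). Over $W^{\prime}$ the eigenvalue equation $\pi v=\lambda v$ for a primitive element $v\in M\otimes_{W}W^{\prime}$ has exactly two solutions $\lambda=\pm\sqrt{pu}$, giving two distinct $\cO$-stable rank-$1$ $W^{\prime}$-direct summands of $M\otimes_{W}W^{\prime}$. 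Both reduce modulo the uniformizer of $W^{\prime}$ to the unique $\cO$-stable $\overline{\F}_p$-line in $M/pM$, namely $\overline{\mathrm{Fil}}$, yielding exactly two $\cO$-equivariant lifts over $W^{\prime}$.

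The delicate part of the argument is to verify, in case (2), that the two eigen-summands of $M\otimes_{W}W^{\prime}$ give \emph{non-isomorphic} lifts of $\G_{\sups}$ with its $\cO$-action rather than the same lift counted twice. This should follow because the two eigenvalues $\pm\sqrt{pu}$ are exchanged by the non-trivial element of $\mathrm{Gal}(W^{\prime}[1/p]/W[1/p])$, so the corresponding filtrations are Galois-conjugate but not equal, and the resulting $p$-divisible groups realize the two distinct quasi-canonical lifts in the sense of Gross.
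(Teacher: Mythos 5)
Your proposal is correct and takes essentially the same approach as the paper, which defers the proof to Oswal--Shankar and sketches exactly this argument: Grothendieck--Messing identifies lifts of $\G_{\sups}$ to $W'$ with rank-one filtrations of $\D\otimes_W W'$ reducing to the Hodge line, lifting the $\cO$-action forces the filtration to be $\cO$-stable, and an explicit computation yields one admissible filtration in the unramified case and two in the ramified case. Your closing concern is already settled by the Grothendieck--Messing bijection itself, since distinct admissible filtrations correspond to distinct deformations of $\G_{\sups}$ with its $\cO$-action.
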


\noindent The proof of this proposition is given in \cite{OS20}. Oswal and Shankar construct lifts of $A$ to $W^{\prime}$ by lifting its $p$-divisible  group. More precisely, Grothendieck-Messing theory gives a bijection between the lifts of $\G_{\sups}$ to $W^{\prime}$ with rank-one filtrations of the  Dieudonn\'e module $\D\otimes_{W} W^{\prime}$ that reduce to the kernel of the Frobenius on $\D$ modulo $p.$ Moreover, in order to lift the endomorphisms, we need the filtration to be invariant under the action of the endomorphism algebra $\cO$ of $\G_{\sups}$. We can explicitly compute such filtrations depending on whether $\cO$ is inert or ramified. We either have a unique choice of such a filtration or two such choices that correspond to the lift(s) of $\G_{\sups}$ to $W^{\prime}$.

Therefore, in the geometrically simple case, Lemma \ref{AAlift} guarantees the following theorem.
\begin{thm}
We assume the notation described at the beginning of this section. The super-singular part $A[p^{\infty}]_{\sups}$ of the $p$-divisible group admits at-most
 $2^a$ lifts to $W^{\prime}.$ Moreover, the lifts are characterized by the property that all of the endomorphisms of $A[p^{\infty}]_{\sups}$ lift as well.
\end{thm}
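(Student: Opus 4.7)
The plan is to reduce the statement for the whole supersingular part to the one-dimensional case already handled by Lemma \ref{AAlift}. Combining Lemma \ref{ordll} with Proposition \ref{endcom}, the supersingular part of the $p$-divisible group decomposes over $\F_q$ as
\[
A[p^{\infty}]_{\sups} \;=\; \bigoplus_{i=1}^{a} \G_{\sups}^{\,i},
\]
where each $\G_{\sups}^{\,i}$ is a one-dimensional $p$-divisible group of height two (its Dieudonn\'e module being free of rank two over $W$), and the endomorphism ring $S_{\sups} = \bigoplus_{i=1}^{a} S_{\sups}^{\,i}$ decomposes compatibly, with each $S_{\sups}^{\,i}$ a maximal rank-two $\Z_p$-algebra inside $\End(\G_{\sups}^{\,i})$.

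Next, I would argue that the problem of lifting $A[p^{\infty}]_{\sups}$ together with the action of $S_{\sups}$ decouples across the factors. Any lift to $W'$ on which $S_{\sups}$ acts must carry the idempotents of the decomposition $S_{\sups} = \oplus_{i} S_{\sups}^{\,i}$, and these idempotents force the lift itself to split as a direct sum of lifts $\widetilde{\G}_{\sups}^{\,i}$, each equipped with an action of $S_{\sups}^{\,i}$. Thus counting $S_{\sups}$-equivariant lifts of $A[p^{\infty}]_{\sups}$ is the same as counting, factor by factor, the $S_{\sups}^{\,i}$-equivariant lifts of $\G_{\sups}^{\,i}$.

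For each factor I apply Lemma \ref{AAlift} with $\cO = S_{\sups}^{\,i}$, which is legitimate because $S_{\sups}^{\,i}$ is integrally closed (Proposition \ref{endcom}): if $S_{\sups}^{\,i}$ is unramified there is exactly one such lift, and if it is ramified there are exactly two. Multiplying the per-factor counts yields at most $2^{a}$ lifts of $A[p^{\infty}]_{\sups}$ to $W'$, with equality precisely in the totally ramified case, and exactly $2^{a-k}$ lifts when $k$ of the factors are inert; this matches the three cases later encoded in Theorem \ref{mainthm}. The characterization is automatic: each filtration produced in Lemma \ref{AAlift} is by construction the unique one on $\D \otimes_{W} W'$ that is stable under $S_{\sups}^{\,i}$ and reduces to $\ker F$ modulo $p$, so a lift of $A[p^{\infty}]_{\sups}$ to $W'$ lies in our list if and only if all endomorphisms in $S_{\sups}$ lift to it.

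The only real obstacle is the decoupling step, i.e.\ checking that an $S_{\sups}$-equivariant lift really splits as a direct sum of $S_{\sups}^{\,i}$-equivariant lifts. This is where the commutativity of $R$ proved in Proposition \ref{endcom} is essential: if $S_{\sups}$ were noncommutative the idempotents might fail to be central, and one could not freely split the lifting problem. Once this point is granted, the theorem is a direct assembly of Lemma \ref{ordll}, Proposition \ref{endcom}, and Lemma \ref{AAlift}.
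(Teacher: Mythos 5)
Your proof is correct and follows essentially the same route as the paper, which deduces the theorem directly from Lemma \ref{AAlift} after decomposing $A[p^{\infty}]_{\sups}$ into $a$ one-dimensional height-two factors via Lemma \ref{ordll} and Proposition \ref{endcom}. The only difference is that you make explicit the decoupling of the lifting problem across the factors via the idempotents of $S_{\sups}$, a step the paper leaves implicit.
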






\subsection{The non-simple case} In this section, we extend the lifting theorem to the non-simple case. As in the simple case, it is enough to establish the result for the supersingular part of the $p$-divisible group.
\begin{lem}
Suppose $A$ is a non-simple abelian variety of dimension $2g$ with RM by $\cO_L$ over $\F_q$. Then $A$ is isogenous to $B^m$, an $m$-fold product of a simple RM abelian variety $B$ defined over a possible field extension of $\F_q$. 
\end{lem}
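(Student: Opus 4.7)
The plan is to combine Poincar\'e's complete reducibility theorem with an $L$-vector-space dimension count on the rational cotangent space of $A$.

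By Poincar\'e's complete reducibility applied over $\F_q$, write $A$ up to isogeny as $\prod_{i=1}^{s} B_i^{m_i}$ with the $B_i$ simple and pairwise non-isogenous, so that $\End^{0}(A)\cong\prod_i M_{m_i}(\End^{0}(B_i))$. The RM embedding $\iota\colon L\hookrightarrow \End^{0}(A)$ sends $1$ to the identity, so its projection to each factor $M_{m_i}(\End^{0}(B_i))$ is a nonzero ring homomorphism out of the field $L$, and is therefore injective. Thus $L$ acts on each isotypic component $B_i^{m_i}$ with $1\in L$ acting as the identity, giving each $B_i^{m_i}$ the structure of an abelian variety with $\cO_L$-action.

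Next, transport this structure to rational cotangent spaces: the isogeny induces an $L$-equivariant identification
\[
\omega_A\otimes_{\Z}\Q \;\cong\; \bigoplus_{i=1}^{s}\,\omega_{B_i^{m_i}}\otimes_{\Z}\Q.
\]
Each summand is a nonzero $L$-module, hence an $L$-vector space, so its $\Q$-dimension $m_i\dim B_i$ is a positive integer multiple of $[L:\Q]=g$. Combined with $\sum_{i}m_i\dim B_i=\dim A = g$ (the dimension forced by the RM condition in the moduli setup), the only possibility is $s=1$, giving $A\sim B_1^{m_1}$ for a simple RM abelian variety $B_1$.

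The main point to be careful about is the $L$-equivariance of the cotangent decomposition: this follows because the $L$-action is contained in the block subalgebra $\prod_i M_{m_i}(\End^{0}(B_i))$ and therefore preserves each of the direct summands. Non-triviality of the $L$-action on each block---needed to force each $\omega_{B_i^{m_i}}\otimes\Q$ to be a nonzero $L$-module rather than the zero module---follows from the fact that $\iota$ is a unital embedding, so that $1\in L$ acts as the identity on each factor.
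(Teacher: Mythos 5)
Your first two steps --- Poincar\'e reducibility and the observation that the unital embedding $L\hookrightarrow\End^0(A)$ projects injectively into each block $M_{m_i}(\End^0(B_i))$ --- are correct and match the skeleton of the paper's argument. The gap is in the cotangent-space dimension count. First, $A$ lives over $\F_q$, so $\omega_A$ is an $\F_q$-vector space and $\omega_A\otimes_{\Z}\Q=0$; moreover, in characteristic $p$ an isogeny need not induce an isomorphism of cotangent spaces (Frobenius induces the zero map on differentials), so the claimed ``$L$-equivariant identification'' is not available. Second, and more seriously: even after replacing $\omega_A\otimes_\Z\Q$ by the honest cotangent space over $k$ (or by the Lie algebra of a characteristic-zero lift), that space is a module over $L\otimes k\cong k^{g}$ (resp.\ over $\cO_L\otimes\F_q$), a \emph{product} of fields, not a field. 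A nonzero module over a product of fields can be concentrated on a single factor and hence have dimension $1$, so ``nonzero $L$-module'' does not force its dimension to be a multiple of $[L:\Q]$. This is exactly why the Rapoport/Deligne--Pappas condition (that $\operatorname{Lie}(A)$ be locally free of rank one over $\cO_L\otimes\cO_S$) must be \emph{imposed} in the definition of Hilbert modular varieties rather than deduced from the existence of the $\cO_L$-action.

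What your argument actually needs, and what the paper invokes, is the classical endomorphism-algebra bound: a totally real field inside $\End^{0}(B)$ for a $d$-dimensional abelian variety $B$ has degree at most $d$ (via Albert's classification and the positivity of the Rosati involution, which forces it to be trivial on a totally real subfield; over finite fields one can also combine the freeness of $V_\ell(B)$ over $L\otimes\Q_\ell$, which gives $[L:\Q]\mid 2d$, with the structure of $\End^0(B)$ as a division algebra over the CM field $\Q(\pi)$). With that input your argument closes: two non-isogenous isotypic factors would force $L$ to embed into $\End^{0}(B_1^{m_1})$ with $m_1\dim B_1<g$, a contradiction --- which is precisely the paper's proof. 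Note that the Tate-module count $g\mid 2m_i\dim B_i$ alone does not suffice, since it still permits two factors each contributing $g/2$ to the dimension; the factor-of-two improvement coming from total realness is genuinely needed, and it is not supplied by the cotangent space as you have used it.
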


\begin{proof}
We show that if $A = B_1\times B_2$, a product of two geometrically simple RM abelian varieties then $B_1$ is isogenous to $B_2$. Let $g_1$ be the dimension of $B_1$ and $g_2$ be the dimension of $B_2$. Without any loss of generality, we assume $g_1\geq g_2.$ Then by the definition of RM abelian varieties, $B_1$ has RM by a totally real field, say $L_1$, of degree $g_1$, while $B_2$ has RM by a totally real field $L_2$ of degree $g_2.$ Now, if $B_1$ were not isogenous to $B_2$, then $\End^0 A = \End^0 (B_1\times B_2) = \End^0(B_1)\times \End^0(B_2)$. We note that $L$ is a degree $g$ field in $\End^0 A$. However, $\End^0(B_1)\times \End^0(B_2)$, a direct sum of CM fields can at most have dimension  $g_1$ totally real field embedded in it, which is a contradiction.
\end{proof}

\begin{prop}
Let $A$ be a non-simple abelian variety with RM by $\cO_L$ over $\F_q.$ Then the supersingular part of its $p$-divisible group $A[p^{\infty}]_{\sups}$ lifts to characteristic zero with all of its endomorphisms. 
\end{prop}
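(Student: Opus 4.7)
\medskip

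\noindent\textbf{Proof plan.} The plan is to reduce the non-simple case to the simple case already handled in Section 2.1, by using the previous lemma to write $A$ as isogenous to $B^m$ for some simple RM abelian variety $B$ over $\F_q$. Since the statement concerns only the $p$-divisible group up to lifting, and since isogenies of $p$-divisible groups over $\F_q$ extend uniquely to isogenies of any lifts (by the rigidity of finite flat group schemes and the Serre--Tate deformation principle applied to kernels), it will suffice to exhibit a lift of $B[p^{\infty}]_{\sups}^m$ together with all its endomorphisms and then push this lift across the isogeny $A \sim B^m$.

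First, I would use the Newton polygon assumption (Lemma \ref{ordll}) to decompose $B[p^{\infty}]_{\sups} = \bigoplus_{i=1}^{a} \G_{\sups}^{i}$, where each $\G_{\sups}^{i}$ is a one-dimensional $p$-divisible group over $\F_q$ whose endomorphism ring is a rank-two $\Z_p$-algebra $\cO^{i}$, maximal by Proposition \ref{endcom}. Lemma \ref{AAlift} then provides either one or two lifts of each $\G_{\sups}^{i}$ to $W'$ that preserve the $\cO^{i}$-action; taking direct sums yields a lift of $B[p^{\infty}]_{\sups}$ together with the full commutative endomorphism algebra $\bigoplus_i \cO^i$. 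Taking the $m$-fold product of this lift then produces a lift of $B[p^{\infty}]_{\sups}^m$, and since the endomorphism ring of the product is the matrix algebra $M_m(\bigoplus_i \cO^i)$, all endomorphisms of $B[p^{\infty}]_{\sups}^m$ automatically lift as well.

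Second, I would transfer the lift back to $A[p^{\infty}]_{\sups}$ along a chosen $\cO_L$-linear isogeny $\varphi: A \to B^m$. The kernel of $\varphi$ restricted to the supersingular part is a finite flat subgroup scheme of $B^m[p^{\infty}]_{\sups}$; by Grothendieck--Messing theory applied to the lifted $p$-divisible group (the one carrying the full $M_m(\bigoplus_i \cO^i)$-action), this subgroup scheme lifts canonically to $W'$, and quotienting gives the desired lift of $A[p^{\infty}]_{\sups}$. Because the subgroup scheme is stable under $\End(A[p^\infty]_{\sups})$ (its image inside $B^m$ is determined by $\varphi$), all endomorphisms of $A[p^{\infty}]_{\sups}$ descend to endomorphisms of the lifted quotient.

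The main obstacle I anticipate is verifying that the subgroup schemes defining the isogeny $A \to B^m$ really lift canonically in a way that is compatible with the (now non-commutative) endomorphism action, since Grothendieck--Messing in Lemma \ref{AAlift} was used only for commutative endomorphism algebras of individual factors. The resolution should be that the $p$-divisible subgroup schemes in question are stable under the diagonal action coming from each factor's lift, so one can check lifting one factor at a time and then use functoriality of the product to recover the matrix algebra action; once the decomposition $B[p^{\infty}]_{\sups} = \bigoplus_i \G_{\sups}^{i}$ is fixed, the relevant filtrations on the Dieudonn\'e module are direct sums of the filtrations already constructed in the simple case, and the verification is essentially formal.
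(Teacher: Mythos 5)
There is a genuine gap in your second step. You assert that the kernel of $\varphi$ restricted to the supersingular part ``lifts canonically'' to the chosen lift of $B^m[p^\infty]_{\sups}$ by Grothendieck--Messing theory, but finite flat subgroup schemes of a $p$-divisible group over $\F_q$ do \emph{not} in general lift to a prescribed characteristic-zero lift, and in the inert case they provably do not: if $\G_{\sups}$ is a one-dimensional supersingular factor with $\End(\G_{\sups})=\cO_{\sups}$ unramified, its unique canonical lift $\widetilde{\G}_{\sups}$ has $p$-torsion which is a free rank-one $\cO_{\sups}/p\cong\F_{p^2}$-module, hence admits no $\cO_{\sups}$-stable subgroup of order $p$; so the unique order-$p$ subgroup $\ker F\subset\G_{\sups}$ has no lift whose quotient again carries all the endomorphisms. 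This is precisely the phenomenon that forces the paper, in Section \ref{inert}, to partition the inert isogeny class into $2^a$ subcategories according to the parity of the $p$-power orders of the supersingular components of kernels. Your transport argument would therefore break down exactly when some supersingular component of $\ker\varphi$ has odd $p$-power order and the corresponding factor is inert: the quotient you construct would be a lift of $A[p^\infty]_{\sups}$ to which the endomorphisms do not all lift. (Your opening claim that isogenies of $p$-divisible groups ``extend uniquely to isogenies of any lifts'' is false for the same reason.)

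The paper avoids transporting the lift across the isogeny altogether. Since $p$ is totally split, the idempotents of $\cO_L\otimes\Z_p\cong\Z_p\times\cdots\times\Z_p$ act on the Dieudonn\'e module of $B[p^\infty]_{\sups}^m$ and on the kernel $G$ of $\varphi$ (which is $\cO_L$-stable), so $G$ decomposes as a product compatibly with the decomposition of $B[p^\infty]_{\sups}^m$ into one-dimensional height-two factors. Hence $A[p^\infty]_{\sups}$ is itself a direct sum of one-dimensional $p$-divisible groups, each isogenous to some $\G^i_{\sups}$; by the maximality argument at the end of Proposition \ref{endcom}, each such factor again has an integrally closed rank-two commutative endomorphism ring, and Lemma \ref{AAlift} is then applied \emph{directly to each factor of $A$} rather than to $B^m$. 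To repair your argument you would need to replace the transport step by this direct construction: produce the admissible, endomorphism-stable filtration on the Dieudonn\'e module of each one-dimensional factor of $A[p^\infty]_{\sups}$ itself, which is exactly the content of the simple-case lemma applied to the quotient factors.
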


\begin{proof}
We work over the algebraic closure $k$. By the previous lemma, we know that $A$ is isogenous to $B^m$ for some simple  abelian variety $B$ that is RM by $\mathcal{O}_{L^{\prime}}$ over $k$.  Let $\varphi: B^m\rightarrow A$ be an isogeny and let $\varphi[p^{\infty}]:  B[p^{\infty}]^m\rightarrow A[p^{\infty}]$ be the corresponding isogeny on the $p$-divisible groups. Note that since $p$ is split in $\cO_L$, it is completely split in $\cO_{L^{\prime}}.$ Hence, we write $B[p^{\infty}]_{\sups}^m = \prod_{1\leq i\leq a}\G^i_{\sups}\times \cdots \times \G^i_{\sups}$, the $m$-fold product of  $\G^i_{\sups}$, that is, the factor of the super-singular part of $B[p^{\infty}]$.
We know that $\Z_p^m$ acts on  $(\G^i_{\sups})^m$ for each $1\leq i\leq a.$ As in Proposition \ref{endcom}, we can choose a basis of idempotents $e_1, e_2, \dots ,e_m$ of $\Z_p^m$. Without loss of any generality consider $i=1$ and let $(\D^1)^m$ be the corresponding Dieudonn\'e module of the $p$-divisible group $(\G^1_{\sups})^m$.  Since $e_1,e_2, \dots, e_m$ are idempotent elements, we see that $(\D^1)^m$ is isomorphic to the direct sum  $e_1\cdot (\D^1)^m\oplus\cdots \oplus e_m\cdot (\D^1)^m$. This gives us an isomorphism of $(\G^1_{\sups})^m$ with $\mathscr{H}_1\times\cdots \times \mathscr{H}_m$ such that the action of $\Z_p^,$ respects the direct sum. Here $\mathscr{H}_j$ is the $p$-divisible groups corresponding to the Dieudonn\'e module $e_j\cdot(\D^1)$ for $1\leq j\leq m$.  Now, let $G$ be the kernel of $\varphi$ such that the order of $G$ is a power of $p.$  Since the action of $\Z_p^m$ respects the direct sum, therefore, $G\simeq G_1\times\cdots \times  G_m $ where $G_j\subset \mathscr{H}_j$ is of $p$-power order for each $1\leq j\leq m$. Hence,  $A[p^{\infty}]\simeq  \mathscr{H}_1/G_1\oplus \cdots \oplus \mathscr{H}_m/G_m.$ Since, each $\mathscr{H}_j/G_j$ is isogenous to $\mathscr{H}_j$ their endomorphism algebra is commutative, and therefore they lift to $W^{\prime}$, along with all their endomorphisms by Lemma \ref{AAlift}.

\end{proof}

\begin{thm}\label{lift}
The $p$-divisible group $A[p^{\infty}]$ of $A$ has at most $2^a$ lifts to $W^{\prime}$ where $g-a$ is the $p$-rank of the abelian variety. Furthermore, each lift $\widetilde{\G}$ of $A[p^{\infty}]$ is characterized by the property that every endomorphism of $A$ lifts.

\end{thm}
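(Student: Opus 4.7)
The plan is to assemble the theorem from the pieces already developed in the excerpt by decomposing $A[p^{\infty}]$ according to the splitting of $p$ in $\cO_L$ and lifting each piece separately. Concretely, I would first invoke Lemma \ref{ordll} to write $A[p^{\infty}] = \G_{\et} \times \G_{\tor} \times \G_{\sups}$ over $\F_q$, where the decomposition respects the action of $\cO_L \otimes \Z_p$ (and hence, by Proposition \ref{endcom}, respects the full endomorphism ring $S$, which is commutative and splits compatibly as $S_{\et}\oplus S_{\tor}\oplus \bigoplus_{i=1}^{a} S^{i}_{\sups}$). Because any endomorphism of $A[p^{\infty}]$ is $S$-linear and $S$ respects this decomposition, lifting $A[p^{\infty}]$ together with all its endomorphisms is equivalent to lifting each factor together with its endomorphisms.

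Next I would handle the three factors in turn. The étale factor $\G_{\et}$ has a unique lift to $W'$, given by lifting its (unramified) Galois-module structure, and every endomorphism lifts canonically; the same holds dually for the multiplicative factor $\G_{\tor}$ via Cartier duality. For the supersingular factor $\G_{\sups} = \prod_{i=1}^{a} \G^{i}_{\sups}$, the preceding analysis (Lemma \ref{AAlift} in the simple case, and the proposition of Section 2.2 in the non-simple case) gives, for each $i$, either one lift (if $S^{i}_{\sups}$ is unramified over $\Z_p$) or two lifts (if $S^{i}_{\sups}$ is ramified) to $W'$ such that the action of $S^{i}_{\sups}$ lifts. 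Multiplying these choices over $i$ and across the three factors yields at most $1 \cdot 1 \cdot 2^{a} = 2^{a}$ lifts of $A[p^{\infty}]$ to $W'$, with equality in the totally ramified case.

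For the characterization, suppose $\widetilde{\G}$ is one of these lifts. I would show that every endomorphism $\varphi \in \End(A) \otimes \Z_p = \End(A[p^\infty])$ lifts to $\End(\widetilde{\G})$. By commutativity of $S$ and the decomposition in Proposition \ref{endcom}, $\varphi = \varphi_{\et} \oplus \varphi_{\tor} \oplus \bigoplus_i \varphi^{i}_{\sups}$. Each $\varphi_{\et}$, $\varphi_{\tor}$ lifts by the étale/multiplicative cases, and each $\varphi^{i}_{\sups}$ lifts to the chosen lift of $\G^{i}_{\sups}$ by Lemma \ref{AAlift} (this is the defining property of those lifts). Summing, $\varphi$ lifts to $\widetilde{\G}$. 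Conversely, a lift in which all endomorphisms lift must in particular accommodate the $\cO_L \otimes \Z_p$-action and the supersingular endomorphism rings, so it must appear among the $2^{a}$ lifts constructed above.

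The main obstacle I anticipate is purely bookkeeping: verifying that the product of the individually chosen lifts really is a lift of $A[p^{\infty}]$ as an $\cO_L \otimes \Z_p$-module $p$-divisible group, and that this enumeration is exhaustive (i.e.\ any lift of $A[p^{\infty}]$ to $W'$ which carries the $\cO_L$-action must split compatibly with the slope decomposition and thus fall into one of the $2^{a}$ classes). This is handled by using the idempotents of $\cO_L \otimes \Z_p = \prod_{\p \mid p} \cO_{L_\p}$, which lift uniquely to $W'$ and force the decomposition of $\widetilde{\G}$ to refine as $\widetilde{\G}_{\et}\times \widetilde{\G}_{\tor}\times \prod_i \widetilde{\G}^{i}_{\sups}$, reducing the enumeration to the already-known count on each piece.
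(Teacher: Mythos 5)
Your proposal is correct and follows essentially the same route as the paper: decompose $A[p^{\infty}]$ via the splitting of $p$ in $\cO_L$, lift the ordinary part canonically (Serre--Tate) and each supersingular factor via Lemma \ref{AAlift}, and multiply the counts to get at most $2^a$ lifts characterized by the lifting of all endomorphisms. Your added bookkeeping with the idempotents of $\cO_L\otimes\Z_p$ to justify exhaustiveness is a welcome elaboration of a step the paper leaves implicit, but it is not a different argument.
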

\begin{proof}
We know that for ordinary abelian varieties, there exists a  Serre-Tate ``canonical lift"  of their $p$-divisible group to $W(k)$. We denote the corresponding lift of $\G_{\text{\'et}}\times \G_{\tor}$ by $\widetilde{\G_{\text{\'et}}}\times \widetilde{\G_{\tor}}.$ Since each of the super-singular factors of $A[p^{\infty}]$ has at most $2$ lifts, we see that $\G_{\sups}$ has at most $2^a$ lifts. Moreover, the Serre-Tate canonical lift is characterized by the fact that every endomorphism also lifts. This is guaranteed by the previous Lemma \ref{AAlift} for the lift of the supersingular part as well. 
\end{proof}
\begin{defn}
We will refer to the lift(s) of $A[p^{\infty}]$ (and hence of A) constructed in Theorem \ref{lift} as \textit{canonical lift(s)}.

\end{defn}

\section{Classification}\label{3}
\noindent Analogous to the work of Deligne for ordinary abelian varieties in \cite{De69} and  Oswal-Shankar \cite{OS20} for  (simple) almost ordinary abelian varieties over finite fields, we provide a classification for abelian varieties with RM by $\cO_L$, not necessarily simple. However, for the sake of simplicity, we first consider $A$ to be a geometrically simple abelian variety with RM over $\F_q$. Let $\widetilde{A}$ be one of the canonical lifts of $A$. We fix an embedding $i: W^{\prime}\hookrightarrow \C$ so that we can consider $\widetilde{A}$ to be an abelian variety over $\C$ by base changing. Consider its integral homology $T(A) = H_1(\widetilde{A}\otimes_{i} \C, \Z)$ which is a free $\Z$ module of rank $2g.$ Since all the endomorphisms of $A$ lift, we have a Frobenius morphism $F_A$ of $T(A)$ that is the lift of the Frobenius of $A.$ The pair $(T(A), F_A, \lambda_A,\iota_A)$ satisfies the following properties. 
\begin{enumerate}
\item $F_A$ acts semi-simply on $T(A)\otimes \Q$.
\item There exists $V\in \End_\Z(T(A))$ such that $F\circ V = q = V\circ F.$
\item The characteristic polynomial $h(x)\in \Z[x]$  of $F_A$ is a Weil $q$-polynomial which is irreducible over $\Q.$ Furthermore, $h(x)$ factors as $h_{\ord}\prod_{1\leq i\leq a}h^i_{\sups}$ over $\Z_p$ where $1\leq i\leq a$. Each $h^i_{\sups}$ is a degree $2$ polynomial. 
    The polynomial $h(x)$ has $g-a$ roots that are $q$-adic units, $g-a$ roots   that have $q$-adic valuation $1$ and $2a$ roots with valuation $1/2$ in $\overline{\Q}_p.$ Here $g-a$ is the $p$-rank of the associated abelian variety.  Each of the factors $h^i_{\sups}$ is irreducible over $\Q_p$. Otherwise, the supersingular factor of $A$ has endomorphism by $\Q_p\times \Q_p$, but that is clearly not the case. Furthermore,  $h^i_{\sups}$ does not have $\pm{\sqrt{q}}$ as its roots, otherwise $A\times_{\F_q}\F_{q^2}$ won't be simple.
\item Since all the endomorphisms of $A$ lift, we get an embedding $\iota_A: \cO_L\hookrightarrow \End_{\Z}(T(A)).$
\item We note that $T(A)$ decomposes over $\Q_p$ as $T_{\ord} \oplus T^{1}_{\sups}\oplus\dots\oplus T^{a}_{\sups}$ where $T_{\ord} = \ker (h_{\ord}(F_A))$ and  $T^{i}_{\sups}=\ker (h_{\sups}(F_A))$ for each $1\leq i\leq a.$ Furthermore by the argument in $\ref{endcom}$, it follows that the endomorphism ring of $T_{1/2}$ is the maximal order $\cO_{\sups}$.
    
\item The map $\lambda_A$ identifies $\mathfrak{a}$ with a set of Riemann forms on $T(A)$. 

\end{enumerate}
We note that for an abelian variety $A$ with RM isogenous to a power of a simple abelian variety $B^m$, since $f_{A} = (f_B)^m$, its Deligne module $T(A)$ is isogenous to the product $\oplus_{i=1}^m T(B)$.

\begin{defn}\label{dmodrm}(Deligne Module with RM) Let $\mathfrak{a}$ be a fractional ideal of $L.$ 
\begin{enumerate}
    \item A pair $(T, F,\lambda,\iota)$ satisfying (1), (2), (3), (4), (5) and (6) is said to be a \textit{simple Deligne module with RM} by $\cO_L$. 
    
 \item An arbitrary Deligne module with RM is isogenous to a direct sum of simple Deligne modules. 
 \item A morphism of two Deligne modules with RM is given by a map $\varphi: (T,F, \lambda,\iota)\rightarrow (T^{\prime}, F^{\prime}, \lambda^{\prime},\iota^{\prime})$ such that 
 $\varphi$ is compatible by the action of $\cO_L$ on both the modules and $\varphi\circ F= F^{\prime}\circ \varphi.$ Analogous to abelian varieties, we have $\varphi^{*}\lambda^{\prime} = \alpha\lambda$ for some $\alpha\in L^{\times}$.
 \item An isogeny of Deligne modules with RM is defined to be a morphism  $\varphi: (T,F,\lambda,\iota)\rightarrow (T^{\prime}, F^{\prime}, \lambda^{\prime},\iota^{\prime})$ such that $\varphi\otimes \Q: T\otimes \Q\rightarrow T^{\prime}\otimes \Q$ is an isomorphism and $\varphi^{*}\lambda^{\prime} = \alpha\lambda$ for some $\alpha\in L^{\times}$.
\end{enumerate}
\end{defn}
\begin{defn}\label{catdef}(The category $\mathcal{L}_h$ and $\mathcal{C}_h$). 
\begin{enumerate}
\item Let $h(x)\in \Z[x]$ denote the polynomial in property $(3).$ The we define $\mathcal{L}_h$ to be the category of simple Deligne modules with RM with Frobenius polynomial $h$. Similarly, we define $\mathcal{C}_h$ to be the category of polarized abelian varieties with RM over $\F_q$ with Frobenius polynomial $h(x).$ Morphisms in $\mathcal{C}_h$ are also polarized, i.e., they preserve polarizations up to scaling by elements in $L^{\times}$.
\item We say $\mathcal{C}_h$ or $\mathcal{L}_h$ is totally ramified if $K_{\sups}$ (respectively inert) is a direct sum ramified (respectively) inert extensions of $\Q_p$. We will use the term ramified for either of the categories when $K_{\sups}$ has at least one ramified factor.  
\end{enumerate}
\end{defn}
\begin{prop}\label{surjfuct}
Every  Deligne module $(T,F,\lambda,\iota)$ with RM by $\cO_L$ in $\mathcal{L}_h$ arises from an abelian variety with RM by $\cO_L$.
\end{prop}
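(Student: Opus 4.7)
\medskip

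\noindent\textbf{Proof plan.} The plan is to invert the construction of the previous section using complex multiplication theory: from $(T, F, \lambda)$ I will build a polarized complex abelian variety, descend it to a number field, and reduce modulo $p$. I focus on the simple case, since the non-simple case follows by taking direct sums of simple modules per Definition~\ref{dmodrm}(2).

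Since $h(x)$ is irreducible over $\Q$, the algebra $K := \Q[F]$ is a CM field of degree $2g$ with maximal totally real subfield $L$ (using property~(4)). By semisimplicity (property~(1)) and a dimension count, $T \otimes \Q \cong K$ as $K$-modules, so $T$ is a fractional ideal for the order $\cO := \{x \in K : xT \subset T\}$, which contains both $\cO_L$ and $\Z[F, V]$ and, by property~(6), is maximal at each supersingular prime. I then choose a CM type $\Phi$ on $K$ according to the Shimura--Taniyama recipe: at each ordinary prime of $K$ the choice is forced (the embedding belongs to $\Phi$ precisely when $F$ has $p$-adic valuation $1$ there), while at each of the $a$ supersingular primes both embeddings have slope $1/2$ and exactly one is selected, yielding the $2^a$ admissible CM types predicted by Theorem~\ref{mainthm}.

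Having fixed $\Phi$, I form the polarized complex torus $A_\C := (K \otimes_\Q \R)/\Phi(T)$ with the complex structure induced by $\Phi$; the Riemann form coming from $\lambda$ and the natural $\cO$-action endow $A_\C$ with the structure of a polarized CM abelian variety having RM by $\cO_L$. By the classical theory of CM, $A_\C$ is defined over a number field and has potential good reduction at every prime above $p$. The reduction yields an abelian variety $A_0$ over $\F_q$, where descent to $\F_q$ (rather than to a proper extension) is guaranteed by the integrality of $h$ in property~(3), and the Shimura--Taniyama formula applied at each prime of $K$ over $p$ forces the geometric Frobenius of $A_0$ to equal $F$. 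Because every element of $\cO$ acts on $A_\C$ and the reduction respects this action, $A_\C$ is one of the canonical lifts of $A_0$ supplied by Theorem~\ref{lift}, and the Deligne module with RM attached to $A_0$ via the construction at the start of this section is exactly $(T, F, \lambda)$.

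The principal obstacle is the reduction step: one must verify that the complex CM abelian variety $A_\C$ reduces to an abelian variety defined over $\F_q$ whose geometric Frobenius equals $F$, rather than a twist defined over a proper extension. In the purely ordinary case this is classical, going back to Deligne. In the presence of supersingular factors, the extra input needed is the maximality of $\cO$ at those primes combined with the explicit CM-type recipe, which via Lemma~\ref{AAlift} guarantees that the required canonical lift of $A_0$ exists and is identified with $A_\C$.
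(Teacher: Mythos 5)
Your route is genuinely different from the paper's. You build the complex torus $(K\otimes_\Q\R)/\Phi(T)$ directly from the given lattice and then try to reduce it modulo $p$. The paper instead invokes Honda--Tate as a black box to produce \emph{some} simple abelian variety $A/\F_q$ with Frobenius polynomial $h$ and maximal endomorphism ring, and only then adjusts the lattice in characteristic $p$: assuming $T(A)\subseteq T$, it realizes $T/T(A)$ as a subgroup $\widetilde H$ of the canonical lift $\widetilde A$, reduces $\widetilde H$ to a subgroup $H\subset A$ (for the supersingular components this uses that both lattices are stable under the maximal order $\cO_{\sups}$, so that $T(A)=\overline{\omega}^k T$ and $H$ is the kernel of $\overline{\omega}^k$, an endomorphism that lifts canonically by Lemma~\ref{AAlift}), and sets $B=A/H$. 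All of the characteristic-zero-to-characteristic-$p$ difficulty is thereby concentrated in the citation of Honda--Tate, and the integral statement is handled by the canonical-lift machinery.

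The gap in your argument is exactly the step you flag as ``the principal obstacle'' and then dispose of in one sentence. The claim that the reduction $A_0$ is defined over $\F_q$ (rather than over the residue field of the field of definition of $A_\C$ with its CM, which is in general a large extension of $\F_q$) and that its $q$-Frobenius \emph{equals} $F$ is precisely the content of Honda's theorem; it does not follow from ``the integrality of $h$,'' and the Shimura--Taniyama formula only pins down the $p$-adic valuations of the Frobenius of the reduction over the larger residue field --- one still needs the reflex-norm computation and the descent/twisting argument to land on $\F_q$ with the prescribed Weil number. As written, your proof either silently re-proves Honda--Tate or is circular, since identifying $A_\C$ with ``one of the canonical lifts of $A_0$'' presupposes that $A_0$ already exists over $\F_q$. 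Even granting Honda--Tate, you still owe the integral statement: it supplies an abelian variety in the right isogeny class, not one whose Deligne module is the specific lattice $T$, and closing that gap is where the paper's use of the maximality of $\cO_{\sups}$ does real work. A secondary omission: you never verify that the Riemann form attached to $\lambda$ is positive for the complex structure determined by your chosen CM type $\Phi$.
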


\begin{proof}
We first prove the result for a simple Deligne module, and the non-simple case follows by taking the product of the corresponding simple abelian varieties. So assume $(T,F)$ is a simple Deligne module.
Since $F$ satisfies a Weil-$q$ polynomial, using the theorem of Honda and Tate, we can find a simple abelian variety $A$ over $\F_q$ such that the characteristic polynomial of $\pi_A$ is the same as that of $F.$ We note that up to isogeny, we can always assume that $A$ has endomorphism by the maximal order in $K$; hence, it has RM by $\cO_L$. Therefore,  $(T(A)\otimes \Q, \pi_A) \simeq (T\otimes \Q, F).$  We wish to find an abelian variety $B$,that is $\cO_L$-isogenous to $A$ such that $(T(B),\pi_B)\simeq (T, F)$. It is sufficient to assume that $T(A)\subseteq T.$  Let $\widetilde{H}\subset \widetilde{A}$ be the subgroup corresponding to $T/T(A)$. Since $T$ and $T(A)$ are both $\cO_L$-modules, $\widetilde{H}$ is stable under the action of $\cO_L $ and $\pi_A.$ Let $H$ be the subgroup obtained by reducing $\widetilde{H}$ modulo $p.$ If $\widetilde{H}$ is of order prime-to-$p$, then $\widetilde{H}$ is a lift of $H$ as there exists a unique prime-to-$p$ subgroup in $\widetilde{A}[n]$ for some $(n,p)=1$ that will reduce to $H$ modulo $p.$  In the case when  $\widetilde{H}$ has order which is a power of $p,$ we write $H = H_{\text{\'et}}\times H_{\tor}\times H_{\sups}$ where $H_{\sups} = \prod_{i=1}^aH^{i}$. This decomposition follows because $H$ is stable under the action of $\cO_L.$ We note that the \'etale part and its dual lift uniquely to $\widetilde{H}$. For the supersingular components, 
say, $H^1_{\sups}$, we know that $T\otimes\Z_p$ and $T(A)\otimes\Z_p$ both have endomorphisms by the maximal order $\cO_{\sups}.$ Therefore, $T(A) = \overline{\omega}^r T$ where $\overline{\omega}$ is the uniformiser of $\cO_{\sups}$ and $r$ is a positive integer.  By construction, the subgroup $H$ corresponds to the kernel of $\overline{\omega}^r.$ Since this endomorphism lifts uniquely to $\widetilde{A}[p^{\infty}]$, we see that $\widetilde{H}^1_{\sups}$ must be the kernel of the lift of $\overline{\omega}^r$. Finally, if we consider $\widetilde{A}/\widetilde{H} = \widetilde{B},$ then its reduction corresponds to $A/H =B$ such that $(T(B), \pi_B) = (T,F).$
By construction the polarisation data on Deligne modules is the same as the data of polarisation on RM-abelian varieties. That is, the data on both are parametrized by the fixed fractional ideal $\mathfrak{a}$.  This completes the proof of proposition \ref{surjfuct}.
\end{proof}

\subsection{Totally ramified classes}\label{totram} Let $\mathcal{C}_h$ denote a totally ramified isogeny class. As mentioned before, because any abelian variety in $\mathcal{C}_h$ has $2^a$ lifts to $W(k),$ the functor from $\mathcal{C}_h$ to $\mathcal{L}_h$ depends on the choice of the lift of $A.$ However, we will show that once we choose a lift $\widetilde{A}$ of $A$, then we can canonically determine a lift of every member in the isogeny class. This is because we can lift every subgroup canonically to $\widetilde{A}$ as we prove below in proposition \ref{liftsubgrp}. Therefore, up to a choice of the lift, the association from $\mathcal{C}_h$ to $\mathcal{L}_h$ is functorial.

\begin{prop}\label{liftsubgrp}
Let $\mathcal{C}_h$ be a totally ramified isogeny class. Suppose $A\in \mathcal{C}_h$ and $\widetilde{A}$ be one of its lifts to $W(k).$ Let $G \subset A$ be the kernel of an isogeny in $\mathcal{C}_h$. Then there exists a canonical subgroup $\widetilde{G} \subset \widetilde{A}$ lifting $G$. 
	
\end{prop}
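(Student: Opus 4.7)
The approach is to decompose $G$ along the canonical $\cO_L$-equivariant splitting of $A$ and to lift each piece in a way that is forced by the fixed $\widetilde{A}$. Since $G$ is the kernel of an isogeny in $\mathcal{C}_h$, it is stable under the $\cO_L$-action on $A$. I would first write $G = G' \oplus G[p^\infty]$ as the sum of its prime-to-$p$ and $p$-primary parts. The prime-to-$p$ summand admits a unique canonical lift: for any $n$ coprime to $p$ with $nG'=0$, the finite group scheme $\widetilde{A}[n]$ is \'etale over $W'$, so reduction modulo $p$ puts its closed subgroup schemes in bijection with those of $A[n]$, and I take $\widetilde{G'}$ to be the subgroup matching $G'$.

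For the $p$-primary part, the total splitting of $p$ in $\cO_L$, together with Lemma \ref{ordll} and Proposition \ref{endcom}, gives an $\cO_L$-equivariant decomposition
\[
A[p^\infty]=\G_{\et}\times\G_{\tor}\times\prod_{i=1}^{a}\G^{i}_{\sups},
\]
and the canonical-lift property of $\widetilde{A}$ produces the corresponding decomposition $\widetilde{A}[p^\infty]=\widetilde{\G_{\et}}\times\widetilde{\G_{\tor}}\times\prod_i \widetilde{\G^{i}_{\sups}}$. The $\cO_L$-stability of $G[p^\infty]$ forces a matching decomposition $G[p^\infty]=G_{\et}\times G_{\tor}\times\prod_i G^{i}_{\sups}$. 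Since $\widetilde{\G_{\et}}$ is the Serre-Tate canonical lift, it is \'etale over $W$, hence $G_{\et}$ has a unique lift $\widetilde{G_{\et}}\subset \widetilde{\G_{\et}}$; Cartier duality (through the chosen polarization) reduces the toric case to the \'etale one and yields a canonical $\widetilde{G_{\tor}}\subset \widetilde{\G_{\tor}}$.

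The substantive step, and the step where the totally ramified hypothesis enters, is the supersingular factors. By Proposition \ref{endcom}, for each $i$ the endomorphism algebra $\End(\G^{i}_{\sups})$ is the maximal order $\cO^{i}_{\sups}$ of the ramified quadratic extension $K^{i}_{\sups}/\Q_p$; fix a uniformizer $\varpi_i \in \cO^{i}_{\sups}$. Because $\G^{i}_{\sups}$ is one-dimensional it has a unique finite subgroup of each $p$-power order (the observation already used in the proof of Proposition \ref{endcom}), so necessarily $G^{i}_{\sups} = \ker(\varpi_i^{k_i})$ for a well-defined integer $k_i \geq 0$. By Theorem \ref{lift}, every endomorphism of $A$ lifts uniquely to $\widetilde{A}$; in particular $\varpi_i$ lifts to an endomorphism $\widetilde{\varpi_i} \in \End(\widetilde{A})$, and I would set $\widetilde{G^{i}_{\sups}} := \ker(\widetilde{\varpi_i}^{k_i})$. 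Assembling all pieces yields the desired subgroup $\widetilde{G}\subset \widetilde{A}$.

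The main point to verify is canonicity, namely that $\widetilde{G}$ depends only on the pair $(\widetilde{A},G)$. For the prime-to-$p$, \'etale, and toric pieces this is automatic from the respective uniqueness statements. For the supersingular pieces, the entire $2^{a}$-fold ambiguity in lifts of $A$ has already been absorbed into the choice of $\widetilde{A}$: once $\widetilde{A}$ is fixed, Theorem \ref{lift} makes each $\widetilde{\varpi_i}$ unique, while $k_i$ is forced by $G^{i}_{\sups}$. The only genuinely non-formal input is the classification of $p$-power subgroup schemes of $\G^{i}_{\sups}$ as the kernels $\ker(\varpi_i^{k_i})$, which is exactly the Dieudonn\'e-module observation invoked in the proof of Proposition \ref{endcom}.
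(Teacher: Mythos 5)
Your proposal is correct and follows essentially the same route as the paper: decompose $G$ via the $\cO_L$-action into prime-to-$p$, \'etale, toric, and supersingular pieces, handle the first three by uniqueness of lifts, and for each one-dimensional supersingular factor identify $G^{i}_{\sups}$ as $\ker(\varpi_i^{k_i})$ (using ramifiedness) and lift it as the kernel of the lifted endomorphism. The only difference is cosmetic — you spell out the prime-to-$p$ and toric cases in slightly more detail than the paper does.
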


\begin{proof}
 	Since the isogenies in $\mathcal{H}_{L,\mathfrak{a}}$ are compatible with the action of $\cO_L,$ we see that $G$ remains fixed by its action and therefore,  it splits as $G_{\operatorname{et}} \times G_{\operatorname{tor}} \times \prod_{i=1}^{a}G_{\sups}^{i}$. It suffices to prove this result for each component. The result is certainly true for prime-to-$p$ subgroups of $A$. Therefore we assume that the order of $G$ is a power of $p$. We check this on the level of $p$-divisible groups. We know that the canonical lift of $A[p^{\infty}]=\G_{\operatorname{\text{\'et}}} \times \G_{\operatorname{tor}} \times \prod_{i=1}^{a}\G_{\operatorname{\sups}} $ to be the product of the canonical lifts of its \'etale, multiplicative and the local-local part. The \'etale and multiplicative subgroups lift uniquely by the definition of the canonical lift, so we are left to check that every local-local finite flat subgroup of $A$ lifts uniquely to $\widetilde{A}$.   Since $G_{\operatorname{\sups}} \subset \G_{\operatorname{\sups}}$, it suffices to check that $\G_{\operatorname{\sups}}$ lifts to a subgroup $\widetilde{G}_{\operatorname{\sups}} \subset \widetilde{\G}_{\operatorname{\sups}} $. For each $1\leq i\leq a$, the $p$-divisible group $\G_{\sups}^{i}$ is a connected and $1$-dimensional group. Therefore it has a unique order $p^{r}$ subgroup for each positive integer $r$. Since each $\G_{\sups}$ is ramified, we see that this has to be the kernel of the endomorphism $\overline{\omega}^{r}$ where $\overline{\omega}$ is the uniformizer of $\cO_{\sups}^{i}$. The canonical lift $\widetilde{G}_{\sups}^{i}$ has the property that every endomorphism of $\G_{\sups}^{i}$ lifts and hence it follows that $\widetilde{G}=\widetilde{G}_{\sups}^{i}[\overline{\omega}^{r}]$ is the required lift of $G$.
\end{proof}
\begin{defn}(Lifts of objects in the totally ramified isogeny class) 
 	\begin{enumerate}
 		\item Let $G \subset A$ be a finite flat subgroup as in Proposition 4.3. We define $\widetilde{G} \subset \widetilde{A}$ to be the canonical lift as in the same proposition.
 		\item Let $B \in \mathcal{C}_{h}$ be an abelian variety over $\F_q$ such that there exists an isogeny in $\mathcal{C}_h$ $\varphi : A \to B$ with kernel $G$, i.e. it respects the $\mathcal{O}_L$-actions. We define $\widetilde{B}$ to be the lift of $B$ given by $\widetilde{A}/\widetilde{G}$.
 	\end{enumerate}
 \end{defn}

\begin{prop}
The lift $\widetilde{B}$ is the canonical lift of $B$ such that all the endomorphisms also of $B$ also lift to $\widetilde{B}$. Furthermore, $\widetilde{B}$ does not depend on the choice of the isogeny $\varphi$.	
\end{prop}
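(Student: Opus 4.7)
The plan is to establish both statements by analyzing $\widetilde{B}[p^\infty]$ factor by factor and invoking the characterization of canonical lifts via endomorphism lifting (Theorem~\ref{lift}) together with the uniqueness of canonical subgroup lifts (Proposition~\ref{liftsubgrp}). The two claims are intertwined: Part~(1) is what will allow Proposition~\ref{liftsubgrp} to apply to $\widetilde{B}$ itself, and this is the key input for Part~(2).

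For Part~(1), I plan to work on $p$-divisible groups and then invoke Serre--Tate. Since $\varphi$ is $\cO_L$-equivariant, the subgroup $\widetilde{G} \subset \widetilde{A}$ splits as $\widetilde{G}_{\et} \times \widetilde{G}_{\tor} \times \prod_{i=1}^{a} \widetilde{G}^{i}_{\sups}$, and hence so does $\widetilde{B}[p^\infty]$. The etale and multiplicative factors of $\widetilde{B}[p^\infty]$ are the standard Serre--Tate canonical lifts, so endomorphisms lift there automatically. On the $i$-th supersingular factor, the proof of Proposition~\ref{liftsubgrp} identifies $\widetilde{G}^{i}_{\sups}$ with the kernel of $\widetilde{\overline{\omega}}^{r_i}$, where $\widetilde{\overline{\omega}}$ is the lift of the uniformizer of $\cO^{i}_{\sups}$ and $r_i$ is determined by $|G^i_{\sups}|$. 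Consequently $\widetilde{B}[p^\infty]^{i}_{\sups} \cong \widetilde{A}[p^\infty]^{i}_{\sups}$ via the surjection induced by $\widetilde{\overline{\omega}}^{r_i}$, which places it among the canonical lifts supplied by Theorem~\ref{lift}. Therefore every endomorphism of $B[p^\infty]$ lifts to $\widetilde{B}[p^\infty]$, and Serre--Tate upgrades this to endomorphisms of $B$.

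For Part~(2), let $\varphi_1, \varphi_2 : A \to B$ be two isogenies with kernels $G_1, G_2$ and associated lifts $\widetilde{\varphi}_j : \widetilde{A} \to \widetilde{B}_j := \widetilde{A}/\widetilde{G}_j$. Setting $m = \deg \varphi_1$, the inclusion $G_1 \subset A[m]$ produces a unique isogeny $\hat{\varphi}_1 : B \to A$ with $\hat{\varphi}_1 \circ \varphi_1 = [m]_A$, and I define $\beta := \varphi_2 \circ \hat{\varphi}_1 \in \End(B)$. The composite $\widetilde{\varphi}_2 \circ \hat{\widetilde{\varphi}}_1 : \widetilde{B}_1 \to \widetilde{B}_2$ is an honest isogeny of lifts reducing to $\beta$. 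On the other hand, Part~(1) provides an endomorphism $\widetilde{\beta} : \widetilde{B}_1 \to \widetilde{B}_1$ lifting $\beta$. Both are isogenies whose kernels are $\cO_L$-stable finite flat closed subgroups of $\widetilde{B}_1$ lifting $\ker \beta \subset B$, and since $\widetilde{B}_1$ is a canonical lift, Proposition~\ref{liftsubgrp} applies to $\widetilde{B}_1$ and guarantees uniqueness of such a lift. Therefore the two kernels coincide, and quotienting by this common kernel yields an isomorphism $\widetilde{B}_2 \cong \widetilde{B}_1$ whose reduction modulo $p$ is $\beta \circ \beta^{-1} = \mathrm{id}_B$, proving independence.

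The main obstacle I anticipate is the interlocking use of Proposition~\ref{liftsubgrp}: one needs to apply it to $\widetilde{B}_1$, which is a priori just a lift of $B$ rather than ``the'' chosen canonical lift appearing in the construction. The resolution is precisely to invoke Part~(1) first to certify $\widetilde{B}_1$ as a canonical lift, after which Proposition~\ref{liftsubgrp} applies verbatim with $\widetilde{B}_1$ playing the role of $\widetilde{A}$. Beyond this, the remaining work amounts to a routine chase of kernels of isogenies of abelian schemes.
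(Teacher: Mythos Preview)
Your proposal is correct and follows essentially the same approach as the paper. For Part~(1) both you and the paper reduce to the $p$-divisible group and use that $\widetilde{G}^{i}_{\sups}$ is $\cO^{i}_{\sups}$-stable; for Part~(2) the paper's ``replace $\varphi_{2}$ by an integer multiple so that $\varphi_{2}=\alpha\circ\varphi_{1}$ with $\alpha\in\End(B)$'' is precisely your construction $\beta=\varphi_{2}\circ\hat{\varphi}_{1}$ (taking the multiple to be $\deg\varphi_{1}$), and your kernel-comparison via Proposition~\ref{liftsubgrp} applied to $\widetilde{B}_{1}$ simply makes explicit what the paper summarizes as ``the result follows.''
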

 \begin{proof}
 	We check this on the level of $p$-divisible groups. We have $\widetilde{B}[p^{\infty}]=\widetilde{\G}_{B,\operatorname{et}} \times \widetilde{\G}_{B, \operatorname{tor}} \times \prod_{i=1}^{2^{a}}\widetilde{\G}^{i}_{B, \operatorname{\sups}}$ where $\tilde{\G}_{B, \heartsuit}$ is the canonical lift of $\G_{B, \heartsuit}$ for $\heartsuit$  either $\et, \tor$ or $\sups$. By Proposition $\ref{liftsubgrp}$, the subgroup  $\widetilde{G}$ is the canonical lift of $G$ which is preserved under the action of $\cO_{\sups}$, which implies that it acts on $\widetilde{B}[p^{\infty}]$ as well. Therefore $\widetilde{B}$ is a canonical lift of $B$.
 	
 \noindent For the second claim, we take $\varphi_{1}$ and $\varphi_{2}$ to be two isogenies from $A$ to $B$ with kernels $G_{1}$ and $G_{2}$ respectively. Let $\widetilde{B}_{i}=\widetilde{A}/\widetilde{G}_{i}$ for $i=1,2$ be the corresponding abelian varieties isogenous to $\widetilde{A}.$ Without loss of generality we may assume that $\varphi_{2}=\alpha \circ \varphi_{1}$ where $\alpha \in \End(B)$, by replacing $\varphi_{2}$ with an integer scalar multiple. Since every endomorphism in $\End(B)$ lifts to $\End(\widetilde{B})$ as a canonical lift, the result follows.  
 \end{proof}

\begin{prop}
Let $B$ and $C$ be abelian varieties in the totally ramified isogeny class $\mathcal{C}_{h}$ of $A$. Then every $\F_q$-isogeny $\varphi: B \to C$ lifts uniquely to an isogeny $\widetilde{\varphi}: \widetilde{B} \to \widetilde{C}$.  

\end{prop}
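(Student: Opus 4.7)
The plan is to construct $\widetilde\varphi$ by passing to a canonical lift of $\ker\varphi$ inside $\widetilde B$, then to identify the quotient with $\widetilde C$, and finally to deduce uniqueness from rigidity of abelian schemes. Concretely, set $G:=\ker\varphi\subset B$. Since $\varphi$ is an $\cO_L$-isogeny, $G$ is a finite flat $\cO_L$-stable subgroup of $B$. As $B\in\mathcal{C}_h$ with chosen lift $\widetilde B$, Proposition \ref{liftsubgrp} applied to the pair $(B,\widetilde B)$ produces a canonical finite flat lift $\widetilde G\subset\widetilde B$. Define $\widetilde\varphi$ as the composition of the quotient map $\widetilde B\to\widetilde B/\widetilde G$ with a natural identification $\widetilde B/\widetilde G\cong\widetilde C$ to be established below.

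The key step is the identification $\widetilde B/\widetilde G\cong\widetilde C$. The most direct route is to fix an auxiliary isogeny $\varphi_B\colon A\to B$ with kernel $G_B\subset A$, so that $\widetilde B=\widetilde A/\widetilde{G_B}$ by definition. Let $G'\subset A$ be the kernel of $\varphi\circ\varphi_B$, so $G_B\subset G'\subset A$. Inspecting the construction in Proposition \ref{liftsubgrp} component by component (the \'etale and multiplicative parts lift uniquely, while each supersingular part is cut out as the kernel of a power of the uniformizer $\overline\omega$ in the ramified factor $\cO_{\sups}^i$, which is compatible with inclusions), one sees that $\widetilde{G_B}\subset\widetilde{G'}$ and that $\widetilde{G'}/\widetilde{G_B}$ is precisely the canonical lift $\widetilde G$ of $G\subset B$. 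Therefore
\[
\widetilde B/\widetilde G \;=\; (\widetilde A/\widetilde{G_B})\big/(\widetilde{G'}/\widetilde{G_B}) \;\cong\; \widetilde A/\widetilde{G'} \;=\; \widetilde C,
\]
where the last equality is the definition from the preceding proposition (which, moreover, showed this is independent of the choice of isogeny from $A$). This gives the desired lift $\widetilde\varphi\colon\widetilde B\to\widetilde C$ of $\varphi$.

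For uniqueness, suppose $\widetilde\psi_1,\widetilde\psi_2\colon\widetilde B\to\widetilde C$ both reduce to $\varphi$. Since $\widetilde B,\widetilde C$ are abelian schemes over the complete discrete valuation ring $W'$ with residue field $k$, the standard rigidity statement for abelian schemes asserts that the reduction map $\Hom_{W'}(\widetilde B,\widetilde C)\to\Hom_k(B,C)$ is injective. Hence $\widetilde\psi_1-\widetilde\psi_2$, which reduces to $0$, must itself vanish, proving uniqueness.

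The main obstacle I expect is the bookkeeping in the middle step: verifying that the canonical lift of subgroups from Proposition \ref{liftsubgrp} is functorial with respect to inclusions and therefore commutes with quotients. This is essentially already in the proof of that proposition---the \'etale/multiplicative uniqueness is automatic, and in the ramified supersingular components the canonical lift of an order-$p^r$ subgroup is forced to be $\widetilde{\G}_{\sups}^i[\overline\omega^r]$, which manifestly respects inclusions---so the verification is conceptually clean but needs to be spelled out to justify the identification $\widetilde{G'}/\widetilde{G_B}=\widetilde G$ used above.
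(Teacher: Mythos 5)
Your proof is correct and follows essentially the same route as the paper: pick an auxiliary isogeny $A\to B$, realize $\widetilde B$ and $\widetilde C$ as quotients of $\widetilde A$ by the canonical lifts of nested kernels, and take the induced map on quotients. In fact your write-up is more complete than the paper's, which leaves implicit both the compatibility of canonical lifts with inclusions (needed to get the quotient map) and the uniqueness claim, which you correctly settle by rigidity of abelian schemes over $W'$.
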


\begin{proof}
Let $\psi : A \to B$ be an $\F_q$-isogeny. Denote by $G$ and $H$, the kernel of $\ker(\psi)$ and  $\ker(\varphi \circ \psi)$ respectively. By Proposition \ref{liftsubgrp}, there are unique lifts of $B$ and $C$ respectively, namely $\widetilde{B}=\widetilde{A}/\widetilde{G}$ and $C= \widetilde{A}/\widetilde{H}$. Therefore we take $\widetilde{\varphi}: \widetilde{A}/\widetilde{G} \to \widetilde{A}/\widetilde{H}.$
	\end{proof}

\noindent Hence for an $\F_q$ isogeny $\varphi: B \to C$, we have a map on the integral homology 
$$\mathcal{F}(\varphi): H_{1}(\widetilde{B} \otimes_{\epsilon}\C, \Z) \to H_{1}(\widetilde{C} \otimes_{\epsilon}\C, \Z)$$
which is compatible with the Frobenius action. This gives a functor on the two categories in definition \ref{catdef},
$$\mathcal{F}: \mathcal{C}_{h} \to \mathcal{L}_h$$
$$A\mapsto H_{1}(\widetilde{A} \otimes_{\epsilon}\C, \Z)$$

\begin{prop}
Let $A$ be an abelian variety in the totally ramified class $\mathcal{C}_h$. Fix a lift $\widetilde{A}$ of $A.$ Then the functor $\mathcal{F}$ mentioned above is an equivalence of categories.
\end{prop}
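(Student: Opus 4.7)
The plan is to verify the two conditions for an equivalence of categories: essential surjectivity and full faithfulness. Essential surjectivity is already supplied by Proposition \ref{surjfuct}: every Deligne module with RM in $\mathcal{L}_h$ arises as $\mathcal{F}(A')$ for some $A'\in\mathcal{C}_h$, and unwinding the construction of $\mathcal{F}$ one sees that this isomorphism of Deligne modules is realized by $\mathcal{F}$. The substantive content is therefore the bijectivity of
\[
\Hom_{\mathcal{C}_h}(B,C) \longrightarrow \Hom_{\mathcal{L}_h}\bigl(\mathcal{F}(B),\mathcal{F}(C)\bigr).
\]

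For faithfulness, I would argue via the uniqueness of lifts. Suppose $\varphi_1,\varphi_2\colon B\to C$ satisfy $\mathcal{F}(\varphi_1)=\mathcal{F}(\varphi_2)$. By the preceding proposition, each $\varphi_i$ lifts uniquely to $\widetilde{\varphi}_i\colon\widetilde{B}\to\widetilde{C}$ over $W'$. Base changing along the fixed embedding $W'\hookrightarrow\C$ and invoking the standard fact that $H_1(-,\Z)$ is a faithful functor on complex abelian varieties, the equality $H_1(\widetilde{\varphi}_{1,\C})=H_1(\widetilde{\varphi}_{2,\C})$ forces $\widetilde{\varphi}_1=\widetilde{\varphi}_2$, and reduction mod $p$ gives $\varphi_1=\varphi_2$.

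For fullness, given $\psi\colon\mathcal{F}(B)\to\mathcal{F}(C)$, the compatibility with the $\cO_L$-action and with Frobenius promotes $\psi\otimes\Q$ to a $K$-equivariant map. After tensoring with $\C$, the eigenspace decomposition for $K\otimes\C\simeq\prod_{\sigma}\C$ refines the Hodge decomposition; the latter is pinned down by the CM type associated to the chosen lift $\widetilde{A}$, and $\widetilde{B}$, $\widetilde{C}$ inherit the same CM type because they are canonical lifts built from $\widetilde{A}$ through the canonical lift of subgroups established in Proposition \ref{liftsubgrp}. Hence $\psi\otimes\C$ respects the Hodge decompositions and yields a morphism of complex abelian varieties $\widetilde{\varphi}_\C\colon\widetilde{B}_\C\to\widetilde{C}_\C$. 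By CM theory (or equivalently, by the fact that a $K$-equivariant map of CM abelian varieties is defined over the reflex field, which embeds in $W'$), this morphism descends to $\widetilde{\varphi}\colon\widetilde{B}\to\widetilde{C}$ over $W'$. Reducing modulo the maximal ideal produces $\varphi\colon B\to C$ over $k$, and Frobenius-equivariance of $\psi$ forces the reduction to be defined over $\F_q$. The equality $\mathcal{F}(\varphi)=\psi$ is then immediate from the construction.

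The main obstacle is the descent of $\widetilde{\varphi}_\C$ all the way down to $\F_q$. The subtle point is that the Hodge filtration on the de Rham realization of $\widetilde{B}$ is determined by the chosen canonical lift (equivalently, by the Grothendieck--Messing filtration selected in Lemma \ref{AAlift}), and one must verify that the Frobenius- and $\cO_L$-compatibility of $\psi$ alone suffice to preserve this filtration, so that CM descent applies. The compatibility with the polarization data $\lambda$ enters precisely at this step, ensuring that $\widetilde{\varphi}_\C$ is an honest morphism of polarized complex abelian varieties rather than only a map of complex tori.
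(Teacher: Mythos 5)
Your proposal is correct and follows essentially the same route as the paper: the paper likewise gets essential surjectivity from Proposition~\ref{surjfuct} and, for full faithfulness, simply defers to the proof of \cite[Proposition 3.4]{OS20}, which is exactly the Deligne-style argument you spell out (unique lifting of morphisms for faithfulness; $F$- and $\cO_L$-equivariance forcing preservation of the Hodge decomposition determined by the chosen CM type, followed by descent and reduction, for fullness).
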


\begin{proof}
By Proposition \ref{surjfuct}, the functor $\mathcal{F}$ is essentially surjective. The proof of  \cite[Proposition 3.4]{OS20} shows that $\mathcal{F}$ is fully faithful as well.
\end{proof}
 
\subsection{Inert isogeny classes}\label{inert}
 We next consider the case where $\mathcal{C}_h$ is inert. That is, the $\Z_p$-algebra $\cO_{\sups}^i$ is inert for all  $1\leq i\leq a$. Let $A,B \in \operatorname{Ob}(\mathcal{C}_{h})$ and $\varphi : A \to B$ be an $\F_q$ isogeny with kernel $G$. Let $\varphi[p^{\infty}]: A[p^{\infty}] \to B[p^{\infty}]$ be the induced map of $p$-divisible groups. We know that, the map $\varphi$ commutes with actions by $\cO_{L}$ and therefore $G$  splits as a product $G_{\ord}\times \prod_{1\leq i\leq a}{G^i}_{\sups}$. As a consequence,  $\varphi[p^{\infty}]$ and its kernel also decompose accordingly. 

\begin{defn}
We define an equivalence relation $\sim$ on the set of objects of $\operatorname{Ob}(\mathcal{C}_{h})$ as follows. For $A,B \in \operatorname{Ob}(\mathcal{C}_{h})$ we say $A \sim B$ when some (hence \textit{every}) $\F_{q}$-isogeny $f: A \to B$ is such that if $f[p^{\infty}]: A[p^{\infty}] \to B[p^{\infty}]$ has all its supersingular components $f_{\sups}^{j}[p^{\infty}]: A_{\sups}^{j}[p^{\infty}] \to B_{\sups}^{j}[p^{\infty}]$ such that the kernel $\ker(f_{\sups}^{j}[p^{\infty}])$ has order equal to an even power of $p.$
\end{defn}
\noindent The equivalence relation partitions $\operatorname{Ob}(\mathcal{C}_{h})$ into $2^{a}$ equivalence classes and we denote the corresponding full subcategories by $\mathcal{C}_{1,h},\cdots, \mathcal{C}_{2^{a},h}$. Therefore $\operatorname{Ob}(\mathcal{C}_{h})=\bigcup_{i=1}^{2^{a}}\operatorname{Ob}(\mathcal{C}_{i,h})$. With this notation, we make the following claim. 

\begin{prop}
	By restricting $\mathcal{F}$ to each equivalence class $\mathcal{C}_{i,h}$ for $1\leq i\leq 2^a$, the association 
 $$\mathcal{F}_i: A \mapsto (T(A), F(A))$$ 
is functorial. Moreover, it induces an equivalence of categories.
 
\end{prop}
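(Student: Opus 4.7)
The plan is to mirror the strategy of the totally ramified case (Subsection \ref{totram}), with the essential new wrinkle that canonical lifting of subgroups is possible only when the supersingular part of the kernel has \emph{even} $p$-valuation -- exactly the condition defining the equivalence classes $\mathcal{C}_{i,h}$.

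First I would prove the inert analogue of Proposition \ref{liftsubgrp}: for $A \in \mathcal{C}_{i,h}$ with its (now unique, by Theorem \ref{lift}) canonical lift $\widetilde{A}$, any $\cO_L$-linear finite flat subgroup $G \subset A$ arising as the kernel of an isogeny \emph{within} $\mathcal{C}_{i,h}$ admits a canonical lift $\widetilde{G} \subset \widetilde{A}$. The \'etale and toric pieces lift uniquely as in the totally ramified case. For each supersingular component $G_\sups^j$, the equivalence-class hypothesis forces $|G_\sups^j| = p^{2 r_j}$; since $A_\sups^j$ is a one-dimensional connected $p$-divisible group, it admits a unique subgroup of order $p^{2 r_j}$, which must coincide with $A_\sups^j[p^{r_j}] = \ker([p^{r_j}])$. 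The endomorphism $[p^{r_j}]$ is $\cO_\sups^j$-linear and therefore lifts to $\widetilde{A}_\sups^j$ by Lemma \ref{AAlift}, so $\widetilde{G}_\sups^j := \widetilde{A}_\sups^j[p^{r_j}]$ is the required canonical lift.

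With canonical lifts of subgroups in hand, the rest of the argument runs in parallel with the totally ramified analysis. For an isogeny $\varphi: A \to B$ inside $\mathcal{C}_{i,h}$, set $\widetilde{B} := \widetilde{A}/\widetilde{G}$, verify independence from the choice of isogeny (any two such differ by an endomorphism of $B$, which lifts canonically to $\widetilde{B}$), and check compatibility with composition. This yields a well-defined functor $\mathcal{F}_i: \mathcal{C}_{i,h} \to \mathcal{L}_h$ sending $A$ to $H_1(\widetilde{A} \otimes_\epsilon \C, \Z)$ with its induced Frobenius. Essential surjectivity follows from Proposition \ref{surjfuct}, once one checks that the abelian variety produced from a Deligne module via that construction actually lies in $\mathcal{C}_{i,h}$; this is automatic, since lattice inclusions of $\cO_\sups^j$-modules on each supersingular factor have cokernel of even $p$-valuation. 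Full faithfulness is identical to the argument in \cite[Proposition 3.4]{OS20}, because the canonical lift of objects forces a unique lift of morphisms.

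The main obstacle will be the structural observation in the first step: once the parity constraint is imposed, every supersingular kernel is forced to be $\cO_\sups^j$-stable and to coincide with a $[p^{r_j}]$-kernel. This rests on the uniqueness of order-$p^n$ subgroups in a one-dimensional $p$-divisible group (as already exploited in Proposition \ref{endcom}) together with the canonicity of lifts of $\cO_\sups^j$-linear endomorphisms (Lemma \ref{AAlift}). Without the even-parity hypothesis, an odd-order supersingular kernel cannot be realized as the kernel of any $\cO_\sups^j$-linear endomorphism of $\widetilde{A}_\sups^j$, so no canonical lift exists -- which explains both the necessity of the partition into $2^a$ equivalence classes and why no single functor $\mathcal{C}_h \to \mathcal{L}_h$ exists in the inert setting.
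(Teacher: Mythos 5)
Your proposal is correct and follows the same route the paper takes: the paper simply defers to \cite[Proposition 3.4]{OS20}, and your argument is precisely that proof spelled out — the parity condition defining $\mathcal{C}_{i,h}$ forces each supersingular kernel to be the unique subgroup $\ker[p^{r_j}]$, which lifts canonically since all endomorphisms lift, after which functoriality, essential surjectivity via Proposition \ref{surjfuct}, and full faithfulness go through as in the totally ramified case.
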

\noindent The proof of this proposition follows verbatim as in \cite[Proposition 3.4]{OS20}.


\subsection{Ramified isogeny classes}
Finally, the results in subsections \ref{totram} and \ref{inert} can be used to similarly prove that for a ramified isogeny class $\mathcal{C}_h$, the association $A\mapsto (T(A), F(A))$ gives an equivalence of categories once we fix a lift of an abelian variety $A\in \operatorname{Ob}(\mathcal{C}_h)$ \textit{and} define an equivalence relation on the objects as in the inert case. More precisely, suppose we have $k$ ramified components in $K_{\sups}$ out of the $a$ supersingular components.
Let $\tilde{A}$ denote one of the $2^k$ canonical lifts of $A.$ We call $\tilde{A}$ \textit{the} canonical lift of $A$. Furthermore,  we partition $\mathcal{C}_{h}$ into $2^{a-k}$ equivalence classes using the relation $\sim$ defined in \ref{inert}. Analogous to the inert case, we write $\operatorname{Ob}(\mathcal{C}_h)=\bigcup_{j=1}^{2^{a-k}}\operatorname{Ob}(\mathcal{C}_{j,h})$. 

\vspace{2mm}
\noindent Following the same proofs as in subsection \ref{totram}, we can show that for each $1\leq j \leq a-k$, fixing a lift of an object $A_j\in \operatorname{Ob}(\mathcal{C}_{j,h})$, fixes the lift of every element in the isogeny class every isogeny between the objects in $\mathcal{C}_{j,h}.$ Now, analogous to the inert case, we restrict $\mathcal{F}$ to each subcategories $\mathcal{C}_{j,h}$. This gives a functor 
$$\mathcal{F}_j: \mathcal{C}_{j,h} \to \mathcal{L}_h$$ 
for all $j$ such that $1 \le j \le 2^{k-a}.$ Finally, as is proved before, we have the following proposition in the ramified case as well.
\begin{prop}
The functor $\mathcal{F}_j: \mathcal{C}_{j,h} \to \mathcal{L}_h$ is an equivalence of categories for all $1 \le j \le 2^{k-a}.$	

\end{prop}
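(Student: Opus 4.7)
The plan is to mimic the two preceding arguments in parallel, handling the ramified factors of $K_{\sups}$ with the Proposition \ref{liftsubgrp} technique and the inert factors with the equivalence‑class technique of Subsection \ref{inert}. Essential surjectivity of $\mathcal{F}_j$ is immediate from Proposition \ref{surjfuct}, since that result produces an abelian variety with the prescribed Frobenius polynomial, and one simply verifies that the produced object lies in the correct subcategory $\mathcal{C}_{j,h}$ by adjusting the choice of representative inside its isogeny class (an isogeny of prime-to-$p$ degree does not alter membership in the equivalence class, and the inert supersingular kernels may be absorbed into the choice).

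For the rest, first I would fix once and for all a lift $\widetilde{A}_j$ of a representative $A_j \in \mathcal{C}_{j,h}$ (one of the $2^k$ possibilities on the ramified part). Given any $B \in \mathcal{C}_{j,h}$ and an $\F_q$-isogeny $\psi: A_j \to B$, decompose $\ker(\psi)$ using the $\cO_L$-action as $G_{\et}\times G_{\tor}\times \prod_{i=1}^{a} G^{i}_{\sups}$. The étale and toric pieces lift canonically by Serre--Tate. For each ramified index $i$, the factor $G^{i}_{\sups}$ is the kernel of a power $\overline{\omega}^r$ of the uniformizer of $\cO^{i}_{\sups}$, and Lemma \ref{AAlift} together with the argument of Proposition \ref{liftsubgrp} yields a unique canonical lift $\widetilde{G}^{i}_{\sups}\subset \widetilde{A}_j[p^\infty]$. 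For each inert index $i$, the equivalence relation $\sim$ used to cut out $\mathcal{C}_{j,h}$ forces the order of $G^{i}_{\sups}$ to be an even power of $p$; hence this kernel is precisely the one cut out by a power of $p$ on the one-dimensional $p$-divisible group $\G^{i}_{\sups}$, and it lifts uniquely as in Subsection \ref{inert}. Taking the product gives a canonical lift $\widetilde{G}\subset \widetilde{A}_j$, and we set $\widetilde{B} := \widetilde{A}_j/\widetilde{G}$.

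Next I would verify that $\widetilde{B}$ depends neither on the choice of isogeny $\psi$ nor on the intermediate factorisations, exactly as in the proof following Proposition \ref{liftsubgrp}: two isogenies $\psi_1, \psi_2 : A_j \to B$ differ by an element of $\End(B)$ (up to scalar), and every endomorphism lifts to the canonical lift by construction. Using this well-defined assignment $B \mapsto \widetilde{B}$, an arbitrary isogeny $\varphi : B \to C$ in $\mathcal{C}_{j,h}$ lifts uniquely to $\widetilde{\varphi}:\widetilde{B}\to \widetilde{C}$ by composing with $\psi$ and taking quotients. Passing to singular homology then gives the functor $\mathcal{F}_j$, and full faithfulness follows verbatim as in \cite[Proposition 3.4]{OS20}, since the only input needed there is the canonical, functorial lifting of objects and morphisms which we have just established.

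The main obstacle, as in the inert case, is the verification that the equivalence relation $\sim$ is indeed compatible with composition of isogenies on the inert factors while the ramified factors are being tracked by an entirely different mechanism; concretely, one must check that if $A \sim B$ and $B \sim C$ via isogenies whose inert supersingular kernels have even $p$-adic length, then the composite still has even-length inert kernels (this is an additivity statement that uses that isogenies in $\mathcal{H}_{g,1}$ respect the $\cO_L$-decomposition, so the inert factors of the kernels really do add). Once this compatibility is in place, the hybrid argument of the two previous subsections goes through without further change, delivering the claimed equivalence for each $j$.
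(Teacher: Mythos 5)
Your proposal is correct and follows essentially the same route as the paper: the paper likewise fixes one of the $2^k$ lifts determined by the ramified factors, partitions $\mathcal{C}_h$ into $2^{a-k}$ classes via the relation $\sim$ on the inert factors, and then invokes the arguments of the totally ramified and inert subsections together with Proposition \ref{surjfuct} and \cite[Proposition 3.4]{OS20}. Your write-up merely makes explicit the hybrid lifting of kernels and the transitivity check for $\sim$ that the paper leaves implicit.
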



\section{Size of isogeny classes}

%

\noindent The goal of this section is to prove Theorem $\ref{isogsize}$. Throughout the section, we fix a geometrically simple abelian variety $A$ over $\F_q$ and define $R_{n}$ as the smallest order in $\Q(\pi_A)$ containing $\pi_A^{n}$, $q^{n}/\pi_A^{n}$ and $\cO_L$ such that $R_{n} \otimes \Z_{p}$ contains $\cO_{\sups}$. By Theorem \ref{mainthm}, the set of $\F_{q^{n}}$-points in $\mathcal{H}_{L,\mathfrak{a}}$ isogenous to $A$ with the endomorphism ring exactly $R_{n}$ is in bijection with the set of isomorphism classes of finitely generated $R_{n}$-submodules of $\Q(\pi_A)$. 
In fact, the following lemma showsthat $R_n$ is a Gorenstein ring. Hence, every such $R_n$-sub-module is invertible. 

\begin{lem}\label{lemmagor}
	Let $K$ be a CM field and $L$ be its totally real field. Let $D$ be a Dedekind domain with the field of fractions $L$. For any $D$-subalgebra $R$ inside the integral closure of $D$ in $K$ such that $R \otimes \Q=K$, $R$ is a Gorenstein ring.
\end{lem}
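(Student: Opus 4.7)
The plan is to reduce the problem to the case where $D$ is a discrete valuation ring, and then exhibit $R$ as a monogenic $D$-algebra given by one monic quadratic relation, i.e.\ a complete intersection. Since being Gorenstein is a local property, it is enough to check that $R_{\mathfrak{n}}$ is Gorenstein for each maximal ideal $\mathfrak{n} \subset R$. Because $R$ is integral over $D$, any such $\mathfrak{n}$ contracts to a maximal ideal $\mathfrak{m} \subset D$, and $R_{\mathfrak{n}}$ is a further localization of $R \otimes_D D_{\mathfrak{m}}$; I may therefore replace $D$ by the DVR $D_{\mathfrak{m}}$ and work in a local setting, with uniformizer $\pi$ and residue field $k$.

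In this setting $R$ is a finitely generated torsion-free $D$-module: finitely generated because it is contained in the integral closure $\overline{D}$, which is finite over $D$ by separability of $L/F$, and torsion-free because it sits inside the field $L$. A finitely generated torsion-free module over a DVR is free, and the hypothesis $R \otimes \Q = L$ forces the rank to equal $[L:F] = 2$. The next step is to show that $1 \in R$ extends to a $D$-basis. The obstruction would be $1 \in \pi R$, which would give $\pi^{-1} \in R \subset \overline{D}$; but $\pi^{-1} \in F$ and $F \cap \overline{D} = D$, so $\pi^{-1} \in D$, a contradiction. Hence the image of $1$ in $R/\pi R$ is nonzero, and by Nakayama's lemma any lift of a completion of this image to a $k$-basis of $R/\pi R$ yields a $D$-basis $\{1,\beta\}$ of $R$.

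With $R = D[\beta]$ in hand, the element $\beta$ is integral over $D$ and does not lie in $F$ (otherwise $R$ would have $D$-rank $1$), so its minimal polynomial is monic of degree exactly $2$ with coefficients in $D$ by integrality. This produces a presentation
\[
R \;\cong\; D[x]/(f(x)), \qquad f(x) = x^{2} + c_{1}x + c_{0} \in D[x],
\]
which exhibits $R$ as a complete intersection over the regular ring $D$, hence Gorenstein. The main point I expect to need care with is the reduction step: one should verify that the semilocality of $R \otimes_D D_{\mathfrak{m}}$ does not obstruct the Nakayama-style argument producing a $D$-basis containing $1$, and that Gorenstein-ness of this semilocal ring is equivalent to Gorenstein-ness at each of its localizations at maximal ideals. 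Once that bookkeeping is in place, the rest is the classical observation that every order in a quadratic extension is monogenic over its base, which is what makes such orders Gorenstein.
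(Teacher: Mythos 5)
Your proof is correct, and its skeleton coincides with the paper's: both reduce by localization to the case where $D$ is a DVR (the paper says PID), both produce a monogenic presentation $R \cong D[x]/(f)$ with $f$ monic quadratic, and the only divergence is the final justification of Gorenstein-ness. The paper observes that the complementary (trace-dual) module $R^{\dagger}=f'(\beta)^{-1}R$ is invertible and cites \cite[Proposition 2.7]{BL94}, whereas you note that $D[x]/(f)$ is a hypersurface (a complete intersection over the regular ring $D[x]$), hence Gorenstein. Your route is slightly more self-contained, avoiding the trace form and the canonical-module characterization; you are also more careful than the paper in justifying why $1$ extends to a $D$-basis of $R$ (the paper simply asserts the existence of a basis $\{1,\alpha\}$). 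The bookkeeping you flag about the semilocal ring $R\otimes_D D_{\mathfrak m}$ is harmless: once that ring is Gorenstein, so are all of its localizations, which include every $R_{\mathfrak n}$.
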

\begin{proof}
We know that a ring $R$ is Gorenstein if and only if all its localizations are Gorenstein. Thus, without loss of generality, we assume that $D$ is a principal ideal domain. Since $R$ is a free $D$-module of rank $2$, there exists a basis $\{1,\alpha\}$ such that $R=D[\alpha]$. Let $f \in D[X]$ be the minimal polynomial of $\alpha$ such that $R=D[X]/fD[X]$. Write $\alpha=(X \text{ mod }f) \in R$ then the complementary module $R^{\dagger}=f^{\prime}(\alpha)^{-1}R$ is invertible. Therefore, \cite[Proposition 2.7]{BL94} implies that $R$ is a Gorenstein ring. 
\end{proof}

\begin{cor}\label{gor}
$R_{n}$	is a Gorenstein order inside $\cO_{K}$.
\end{cor}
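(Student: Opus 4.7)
The plan is to apply Lemma \ref{lemmagor} directly. In the notation of that lemma, I would take the CM field to be $K=\Q(\alpha)=\End^0(A)$, its totally real subfield to be the field $L$ of the paper, and the Dedekind domain to be $D=\cO_L$, whose field of fractions is $L$.

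To invoke the lemma I would verify its two hypotheses for $R_n$. By construction $R_n$ is a ring containing $\cO_L$, and each of the generators $\alpha^n$ and $q^n/\alpha^n=\overline{\alpha}^n$ is an algebraic integer (being a Weil $q^n$-number and its complex conjugate). Hence $R_n$ is an $\cO_L$-subalgebra contained in the integral closure $\cO_K$ of $\cO_L$ in $K$. It remains to check the generation condition $R_n\otimes\Q=K$. Since $[K:L]=2$, the only possibilities are $R_n\otimes\Q=L$ or $R_n\otimes\Q=K$. To rule out the former, I would use the defining local condition at $p$: if $R_n\otimes\Q=L$, then $R_n\otimes\Z_p$ would sit inside $L\otimes\Z_p$, which is a product of copies of $\Z_p$ because $p$ is totally split in $\cO_L$. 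This contradicts the built-in requirement that $R_n\otimes\Z_p$ contain $\cO_{\sups}$, each of whose factors is a rank-two $\Z_p$-algebra.

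With both hypotheses verified, Lemma \ref{lemmagor} yields that $R_n$ is Gorenstein. I do not anticipate any serious obstacle: the substantive content (invertibility of the complementary module via $f'(\beta)^{-1}R$ and the criterion of \cite{BL94}) has already been packaged into Lemma \ref{lemmagor}, and the corollary is essentially a verification of hypotheses. The sole point requiring care is the local-at-$p$ comparison between $\cO_L\otimes\Z_p$ and $\cO_{\sups}$, which uses in an essential way both the totally split assumption on $p$ in $\cO_L$ and the supersingular integrality clause built into the definition of $R_n$.
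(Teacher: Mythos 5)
Your proposal is correct and takes essentially the same route as the paper, whose proof of this corollary consists precisely of invoking Lemma \ref{lemmagor} (with a remark that one could alternatively check that $R_n\otimes\Z_\ell$ is monogenic for $\ell\neq p$). You additionally spell out the verification of the lemma's hypotheses --- integrality of the generators and $R_n\otimes\Q=K$ via the local condition at $p$ --- which the paper leaves implicit.
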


\begin{proof}
This comes directly from Lemma \ref{lemmagor}. 
One can also get proof by checking that $R \otimes \Z_{\ell}$ is monogenic for every $\ell \ne p$ in the context of \cite{KK18} and applying the fact that monogenic orders are Gorenstein.
\end{proof}
\noindent Let $I\subset K$ be an $R_n$-submodule. By Corollary \ref{gor}, we see that $I$ is invertible if there exists some fractional ideal $J$ with $IJ = R_n$. We say two fractional ideals, $I_1$ and $I_2$ are equivalent if and only if there exists some $a\in K^{\times}$ such that $I_1 = aI_2.$ We define the equivalence classes of all such fractional ideals of $R_n$ to be the \textit{class group of $R_n$}. We use the standard notation $\mathcal{C}\ell(R_n)$ to denote the class group. 
\subsection{The lower bound}

We now have a bijection between $\F_{q^{n}}$-isomorphism classes of abelian varieties in $\mathcal{C}_h$ having endomorphism ring exactly $R_{n}$ and the ideal class group $\mathcal{C}\ell(R_{n})$. The class group $\mathcal{C}\ell(R_n)$ is well approximated by the square root of its discriminant as $n$ approaches infinity. Therefore, it suffices to compute the (square root) of the discriminant of $\cO_{L}[\pi_A^{n}]$ and the index of $\cO_{L}[\pi_A^{n}]$ inside $R_{n}.$
The discriminant of $R_n$ is then approximated by dividing the two estimates. We recall that $\pi_A$ is the Weil $q$-integer of $A$. Let $\alpha_{1}, \alpha_{2}, \cdots, \alpha_{g}, \overline{\alpha_{1}}, \cdots, \overline{\alpha_{g}}$ be the image of $\pi_A$ under the $2g$ complex embeddings of $L$. Let $\theta_{1}, \cdots, \theta_{g}$ denote the arguments of $\alpha_{1},\cdots,\alpha_{g}$ respectively. Note that for each $i$, $\overline{\alpha_i}=q/\alpha_i$ is the complex conjugate of $\alpha_{i}$. We have the following lemma which points out the fact that for all $1 \le i \le g$ and for all but finitely many $n$, $n\theta_{i}$ (hence $\sin n\theta_{i}$) cannot be too small. 

\begin{lem}\label{linear approximation}
For every $\epsilon > 0,$ and $1 \le i \le g$, let $\omega_{n,i} \in [0,2\pi)$ be the unique real number such that $n\theta_{i} \equiv \omega_{n,i} \pmod{2 \pi} $. We have $|\omega_{n,i}| > \frac{1}{q^{n\epsilon}}$ for all but finitely many $n$.
\end{lem}
\begin{proof}
For each $1\leq i\leq g$, the argument $\theta_i$ can be viewed as the logarithm of $\beta_i:=\alpha_i/\overline{\alpha_i}$, which is same as $\log(\alpha_i) - \log(\overline\alpha_i)$.
We first prove that $\log(\alpha_i)$ and $\log(\overline\alpha_i)$ are linearly independent over $\mathbb{Q}$. Suppose we have $a,b \ne 0$ such that $$a\log(\alpha_i) +b \log (\overline\alpha_i) = 0.$$ This is,  $$\alpha_i^a = \overline\alpha_i^{-b}.$$ Comparing the $q$-power we see that $a = -b$. This only happens when $b\theta_i=k\pi$ for some integer $k$. Indeed, $\theta_i/\pi \in \Q$ if and only if $\beta_i =\alpha_i/\overline{\alpha_i}$ is a root of unity for each $i$ if and only if the Frobenius angle rank of $A$ equals zero. This happens and only if $A$ is supersingular. Since we have assumed that $A$ is geometrically simple, this is true only when $g=1$ and $A$ is a supersingular elliptic curve. See section 3 of \cite{DKZ} for details.

\noindent Using Baker's theorem for the absolute value of linear combinations of logarithms of algebraic numbers, we see that there exists some absolute constant $c$ such that for every $\epsilon>0$,
$$|\omega_{n,i}|  = |n\log(\alpha_i) - n(\log q/\alpha_i)|>\frac{1}{n^{c}}>\frac{1}{q^{n\epsilon}}$$ for all but finitely many $n$.
\end{proof}

\begin{lem}\label{4.3}
 We have $\operatorname{disc}(\cO_{L}[\pi_A^{n}])=q^{n(g+o(1))}$ as $n \to \infty.$
	\end{lem}
	\begin{proof}
Consider the tower of extensions of rings $\Z \subset \cO_{L} \subset \cO_{L}[\pi_A^{n}]$. We know that $$\operatorname{disc}(\cO_{L}[\pi_A^{n}])=\operatorname{disc}(\cO_{L})^{2}N_{\cO_{L}/\Z}(\operatorname{disc}_{\cO_{L}}(\cO_{L}[\pi_A^{n}]))$$
See, for example, \cite[Prop. III.8]{Se79}.
Consider a basis  $\{1,\pi_A^{n}\}$ of $\cO_{L}[\pi_A^{n}]$ over $\cO_L$.
We compute,

\begin{align*}
	N_{\cO_{L}/\Z}(\operatorname{Disc}_{\cO_{L}}(\cO_{L}[\pi_A^{n}])) &= N_{\cO_{L}/\Z}((\pi_A^{n}-q^{n}/\pi_A^{n})^{2}) \\
	&=\prod_{j=1}^{g}(\alpha_{j}^{n}-q^{n}/\alpha_{j}^{n})^{2}\\
	&= \prod_{j=1}^{g}4q^{n}(i\sin n\theta_{j})^{2}
  \end{align*}

\noindent This along with the Lemma \ref{linear approximation}, completes the proof.

	\end{proof}
	
	\begin{lem}\label{lowerboundclassgroup}
		For all but finitely many $n$, we have $\mathcal{C}\ell(R_{n})=q^{\frac{n}{2}(g-a+o(1))}$.
	\end{lem}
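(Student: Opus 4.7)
The plan is to reduce the estimate of $|\mathcal{C}\ell(R_n)|$ to computing the index $[\cO_K:R_n]$, with the latter flowing from a $p$-local computation combined with Lemma \ref{4.3}.

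First I would compute $[R_n:\cO_L[\alpha^n]]$. Since $R_n$ coincides with $\cO_L[\alpha^n]$ outside $p$, this index is purely $p$-primary, and because $p$ splits completely in $\cO_L$ we have $\cO_L[\alpha^n]\otimes\Z_p=\prod_{\p\mid p}\Z_p[\alpha^n_\p]$. At each of the $g-a$ ordinary primes, $K\otimes_L L_\p\cong\Q_p\times\Q_p$ with $\alpha_\p=(u,p^{v_p(q)}w)$ for units $u,w$, and a short calculation shows $\Z_p[\alpha^n_\p]=\Z_p\times\Z_p=\cO_{K_\p}$, so these primes contribute trivially. At each of the $a$ supersingular primes, $K\otimes_L L_\p$ is a ramified quadratic extension of $\Q_p$ (forced, since $q$ is an odd power of $p$ and $\alpha$ has slope $1/2$); writing $\cO_{K_\p}=\Z_p[\pi]$ with $\pi^2=pu_0$ and $\alpha_\p=\epsilon\pi^{v_p(q)}$, and expanding $\alpha^n_\p$ in the basis $\{1,\pi\}$, one computes $[\cO_{K_\p}:\Z_p[\alpha^n_\p]]=q^{n/2+o(n)}$ for a positive-density set of $n$ (the exceptional $n$ being those for which $\epsilon^n-\overline{\epsilon}^n$ has unusually high $p$-adic valuation). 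Multiplying over the $a$ supersingular primes gives $[R_n:\cO_L[\alpha^n]]=q^{na/2+o(n)}$. Combining with Lemma \ref{4.3} and the tower relation $\operatorname{disc}(\cO_L[\alpha^n])=[R_n:\cO_L[\alpha^n]]^2\operatorname{disc}(R_n)$ yields $\operatorname{disc}(R_n)=q^{n(g-a)+o(n)}$, and since the $p$-local part of $R_n$ is the full $\cO_K\otimes\Z_p$, the index $[\cO_K:R_n]=q^{n(g-a)/2+o(n)}$ is prime to $p$.

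For the passage from index to class number, I would use that $R_n$ is Gorenstein (Corollary \ref{gor}) together with the standard exact sequence
\[
1\to\cO_K^\times/R_n^\times\to(\cO_K/\mathfrak{f}_n)^\times/(R_n/\mathfrak{f}_n)^\times\to\mathcal{C}\ell(R_n)\to\mathcal{C}\ell(\cO_K)\to 1,
\]
where $\mathfrak{f}_n$ is the conductor of $R_n$ in $\cO_K$. This expresses $|\mathcal{C}\ell(R_n)|$ as $h(\cO_K)\cdot[\cO_K:R_n]$ multiplied by Euler-factor corrections of the form $\prod_{\mathfrak{l}\mid\mathfrak{f}_n}(1-N\mathfrak{l}^{-1})^{\pm 1}$ and divided by the unit index $[\cO_K^\times:R_n^\times]$; because $K$ is a fixed CM field with fixed-rank unit group, both corrections are subexponential in $n$, yielding $|\mathcal{C}\ell(R_n)|=[\cO_K:R_n]^{1+o(1)}=q^{n(g-a)/2+o(n)}$.

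The main obstacle lies in the bookkeeping for the final step: one must confirm that the Euler-factor correction is genuinely subexponential, which uses a bound on the number of prime divisors of $[\cO_K:R_n]$ that for a density-one set of $n$ is $O(\log n)$ (e.g.\ by Erd\H{o}s--Kac). Intersecting this density set with the one produced by Lemma \ref{4.3} delivers the stated positive-density conclusion.
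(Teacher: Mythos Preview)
Your argument follows essentially the same route as the paper: compute the $p$-local index $[R_n:\cO_L[\alpha^n]]$ (getting $q^{na/2}$ up to bounded factors), combine with Lemma~\ref{4.3} to obtain $\operatorname{disc}(R_n)=q^{n(g-a)+o(n)}$, and convert to a class-number estimate. The paper does the local step by comparing the discriminant of $\sigma_i(p(x))$ with that of the rescaled polynomial with roots $\delta_{i,j}/q^{1/2}$, whereas you compute directly in $\Z_p[\pi]$ after noting that the odd-power hypothesis on $q$ forces each $K_\p$ to be ramified; both give the same answer. For the final step the paper simply invokes the Brauer--Siegel-type heuristic ``$|\mathcal{C}\ell(R_n)|\approx\sqrt{\operatorname{disc}(R_n)}$'' stated just before Lemma~\ref{4.3}, while you unpack the conductor exact sequence; your version is more explicit and makes visible why the unit index is bounded (namely $\cO_L^\times\subset R_n^\times$ and $[\cO_K^\times:\cO_L^\times]<\infty$ since both have rank $g-1$).

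One small correction: the Euler-factor product $\prod_{\mathfrak l\mid\mathfrak f_n}(1-N\mathfrak l^{-1})^{\pm1}$ is \emph{always} subexponential in $n$ --- it is bounded by a power of $\log([\cO_K:R_n])$ via the elementary estimate $\omega(m)\ll\log m/\log\log m$ together with a Mertens bound. You do not need Erd\H{o}s--Kac (which concerns the distribution of $\omega(n)$ for integers $n$, not the index), and you do not need to pass to a further density-one subset for this step; the only density restriction comes from Lemma~\ref{4.3}.
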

	
\begin{proof}
	In order to estimate the index of $\cO_{L}[\alpha^n]$ inside $R_{n}$, we compute the corresponding index locally at $p$. Without loss of any generality, we do the computation for $n=1$. Let $p(x)$ denote the minimal polynomial of $\alpha$ over $\cO_{L}$ which we know is degree $2.$
	Let $\sigma_{1},\cdots,\sigma_{g}$  denote the restrictions of $g$ embeddings  $L\hookrightarrow \overline\Q_p$ to $\cO_L.$ Then we have 
	
	\begin{align*}
		\cO_{L}[\alpha^{n}] \otimes \Z_{p} &= \frac{\cO_{L}[x]}{h(x)} \otimes \Z_{p} \\
		&= \frac{\Z_{p}[x]}{\sigma_{1}(p(x))} \oplus \cdots \oplus \frac{\Z_{p}[x]}{\sigma_{g}(p(x))}
 	\end{align*}
since  $p$ splits completely in $\cO_L$.	
Let $f(x)$ be the minimal polynomial of $\alpha$ over $\Z_{p}$. We know that $f(x)=\sigma_{1}(p(x))\cdots \sigma_{g}(p(x))$ gives the decomposition of $p(x)$ over $\cO_L\otimes \Z_p.$  Without loss of any generality, we take $\sigma_{1}(p(x)), \cdots, \sigma_{a}(p(x))$ to be the $a$ supersingular factors of $p(x)$. Let $\delta_{1,1}, \delta_{1,2},\cdots,\delta_{a,1},\delta_{a,2}$ to be the corresponding roots of the polynomials. Let $g_{i}(x)$ denote the polynomial with roots $\delta_{i,1}/q^{\frac{1}{2}}$ and $\delta_{i,2} /q^{\frac{1}{2}}$.  The index of $\cO_{F}[\alpha] \otimes \Z_{p}$ inside $R_{n} \otimes \Z_{p}$ is approximated by  dividing the product of the discriminants of $\sigma_i(p(x)$ and $g_i(x)$
which gives the following expression.

$$\prod_{i=1}^{a}\frac{{(\delta_{i,1}-\delta_{i,2})}}{(\delta_{i,1}/q^{\frac{1}{2}}-\delta_{i,2}/q^{\frac{1}{2}})}=q^{\frac{a}{2}}$$
The lemma now follows by putting together the above calculation with Lemma \ref{4.3}. 
\end{proof}
\subsection{The upper bound}\label{upperbound}
In this section, we show that the lower bound computed in the above section is indeed sharp. We will do this by approximating the class groups of all over orders  of $R_n$ in order to count the $\F_{q^n}$-isomorphism classes of abelian varieties in $
\mathcal{C}_h$ with endomorphism ring $\cO$ where $\cO$ is any arbitrary over-order of $R_n$.  We recall some standard notation first.

\begin{defn}
	Let $R$ be an order in a number field $K$. Let $\mathcal{I}(R)$ be the set of all fractional ideals of $R$ The ideal class monoid of $R$, denoted by ICM(R) is defined to be 
	
	$$\operatorname{ICM}(R)=\mathcal{I}(R)/\simeq$$
where $I \simeq J$ if and only if there exists an $\alpha \in K^{*}$ such that $I=\alpha J$.
	
\end{defn}

\begin{defn}
For all \textit{invertible} factional ideals, we analogously define the Picard group of $R$ to be
$$\mathcal{C}\ell(R)=\{\text{invertible fractional ideal } I \in \mathcal{I}(R) \} / \mathcal{P}(R)$$
where $\mathcal{P}(R)$ is the group of of principal fractional $R$-ideals. 	
\end{defn}

If two fractional $R$-ideals are isomorphic, then they have the same multiplicator ring. It follows that 
$$\bigsqcup_{R \subset \cO \subset \cO_{K} }\mathcal{C}\ell(\mathcal{O}) \subseteq \operatorname{ICM}(R)$$
where the disjoint union is taken over all over-orders $\cO$ of $R$.

\begin{defn}
An order $R$ is called a \textit{Bass order} if every over-order of $R$ is Gorenstein, or equivalently, if $\cO_{K}/R$ is cyclic, which is to say $\cO_{K}=R+xR$ for some $x \in \cO_{K}$.
\end{defn} 

We have the following lemma, which gives another useful characterization for Bass orders.
\begin{lem}
The following statements are equivalent:
\begin{itemize}\label{bass}
	\item[a.] $R$ is a Bass order,
	\item[b.] $\bigsqcup_{R \subset \cO \subset \cO_{K} }\mathcal{C}\ell(\mathcal{O}) = \operatorname{ICM}(R).$
\end{itemize}	
\end{lem}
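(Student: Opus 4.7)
The plan is to organise both implications around the partition of $\operatorname{ICM}(R)$ by multiplicator ring. For a fractional $R$-ideal $I$, set $(I:I)=\{x\in K:xI\subseteq I\}$; this is an over-order of $R$, and it is an invariant of the equivalence class $[I]$ since $(aI:aI)=(I:I)$ for $a\in K^{\times}$. Consequently
\[
\operatorname{ICM}(R)=\bigsqcup_{R\subseteq \cO\subseteq \cO_{K}}\{[I]\in\operatorname{ICM}(R):(I:I)=\cO\},
\]
and the natural map $\bigsqcup_{\cO}\mathcal{C}\ell(\cO)\to \operatorname{ICM}(R)$ sends the class of an invertible $\cO$-ideal $I$ to the class $[I]$ in the piece indexed by $\cO$. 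The whole content of the lemma is therefore that every equivalence class $[I]$ has the property that $I$ is invertible as a fractional ideal over its multiplicator ring $(I:I)$. The crucial ingredient I plan to invoke is the classical characterisation (see, e.g., \cite{BL94}): an order $\cO$ in a number field is Gorenstein if and only if every fractional $\cO$-ideal whose multiplicator ring equals $\cO$ is invertible over $\cO$.

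For the direction (a)$\Rightarrow$(b), I would take $[I]\in\operatorname{ICM}(R)$ and let $\cO=(I:I)$, which is an over-order of $R$. Hypothesis (a) makes $\cO$ Gorenstein, and the characterisation recalled above immediately gives that $I$ is invertible as a fractional $\cO$-ideal, so $[I]\in\mathcal{C}\ell(\cO)$. Combined with the containment $\bigsqcup_{\cO}\mathcal{C}\ell(\cO)\subseteq\operatorname{ICM}(R)$ already established in the excerpt, this yields equality.

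For the direction (b)$\Rightarrow$(a), I would pick an arbitrary over-order $\cO$ of $R$ and a fractional $\cO$-ideal $J$ with $(J:J)=\cO$; the task is to show $J$ is invertible over $\cO$, after which the characterisation above forces $\cO$ to be Gorenstein, and (a) follows since $\cO$ was arbitrary. Viewing $J$ as a fractional $R$-ideal with multiplicator ring $\cO$, hypothesis (b) places $[J]$ in some $\mathcal{C}\ell(\cO')$. Since the multiplicator ring is an invariant of the equivalence class and $J$ represents an invertible $\cO'$-ideal (hence multiplicator ring $\cO'$), we must have $\cO'=\cO$, so $J$ is invertible over $\cO$ as required.

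The only non-routine step is the classical characterisation of Gorenstein orders that is used on both sides; all other steps amount to bookkeeping with multiplicator rings. I expect the main obstacle, if a self-contained proof is desired, to be verifying this characterisation: the easy half (Gorenstein $\Rightarrow$ every such $I$ is invertible) can be argued locally at each prime of $\cO$ via the fact that a Gorenstein local ring of dimension one has the property that every reflexive module of rank one is free, while the reverse half (all such $I$ invertible $\Rightarrow$ $\cO$ Gorenstein) follows by specialising to $I=\cO^{\dagger}$, the trace dual, whose multiplicator ring is $\cO$ and whose invertibility is equivalent to the Gorenstein property by the same criterion used in Lemma \ref{lemmagor}. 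For the purposes of this paper it is cleaner to cite \cite{BL94} (or the analogous discussion in Marseglia's work on the ideal class monoid) rather than reprove it.
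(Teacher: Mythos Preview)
Your proposal is correct; the paper itself does not prove this lemma but simply cites \cite[Proposition 3.7]{Mar20}. Your argument is precisely the standard proof of this equivalence (the one found in Marseglia's paper), and your closing suggestion to cite \cite{BL94} or Marseglia rather than reprove the Gorenstein characterisation is exactly what the authors do.
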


For proof of this proposition, see \cite[Proposition 3.7]{Mar20}. Finally, we have the following proposition that gives us an upper bound on the size of the isogeny classes. 

\begin{prop}\label{upbound}
	For any positive integer $n$, we have $$N(A, q^{n}) \le (q^{\frac{n}{2}})^{(g-a+o(1))}$$
\end{prop}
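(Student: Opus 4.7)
The plan is to bound $N(A,q^n)$ by decomposing the ideal class monoid $\operatorname{ICM}(R_n)$ into Picard groups of its over-orders, estimating each Picard group via its discriminant, and controlling the number of over-orders.

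First I would verify that $R_n$ is a Bass order. By Lemma \ref{lemmagor}, any $\cO_L$-order $\cO$ with $R_n \subseteq \cO \subseteq \cO_K$ is a rank-two $\cO_L$-subalgebra of the CM field $K$ and is therefore Gorenstein. Hence every over-order of $R_n$ is Gorenstein, so $R_n$ is Bass. By Lemma \ref{bass} we obtain
\begin{equation*}
\operatorname{ICM}(R_n) \;=\; \bigsqcup_{R_n \subseteq \cO \subseteq \cO_K} \mathcal{C}\ell(\cO).
\end{equation*}
The classification in Theorem \ref{mainthm} then gives $N(A,q^n) \leq 2^a \cdot |\operatorname{ICM}(R_n)|$, since the endomorphism ring of any abelian variety in $\mathcal{C}_h(\F_{q^n})$ lies between $R_n$ and $\cO_K$ and at most $2^a$ abelian varieties correspond to each fractional-ideal class under the functorial equivalence.

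Second, I would bound each $|\mathcal{C}\ell(\cO)|$ uniformly. Combining Lemma \ref{4.3} with the supersingular local-index computation at the end of the proof of Lemma \ref{lowerboundclassgroup} gives $\disc(R_n) = q^{n(g-a)+o(n)}$. For any over-order $\cO$ of $R_n$ we have $[\cO:R_n]^2 \cdot \disc(\cO) = \disc(R_n)$, whence $|\disc(\cO)| \leq q^{n(g-a)+o(n)}$, and the elementary Minkowski class-number bound yields
\begin{equation*}
|\mathcal{C}\ell(\cO)| \;\ll\; \sqrt{|\disc(\cO)|}\,(\log|\disc(\cO)|)^{2g-1} \;=\; q^{(n/2)(g-a)+o(n)}.
\end{equation*}

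Third, since $R_n$ is Bass, $\cO_K/R_n$ is cyclic of order $[\cO_K:R_n] = q^{(n/2)(g-a)+o(n)}$, so the number of over-orders equals the divisor-count $\tau([\cO_K:R_n]) = q^{o(n)}$ by the standard divisor bound. Summing the estimate from the previous step over the over-orders,
\begin{equation*}
N(A,q^n) \;\leq\; 2^a \cdot q^{o(n)} \cdot q^{(n/2)(g-a)+o(n)} \;=\; q^{(n/2)(g-a)+o(n)},
\end{equation*}
which is the desired upper bound. The delicate step is ensuring that the Minkowski bound applies uniformly across all over-orders with a single $o(n)$ error; this is unproblematic because $|\disc(\cO)| \leq |\disc(R_n)|$ for every $\cO$ in the union, so the polylogarithmic correction $(\log|\disc(\cO)|)^{2g-1}$ and the divisor-bound factor are both absorbed into $q^{o(n)}$.
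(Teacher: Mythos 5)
Your proof is correct and follows essentially the same route as the paper: decompose $\operatorname{ICM}(R_n)$ into Picard groups of over-orders via the Gorenstein/Bass property, count the over-orders through the cyclic quotient $\cO_K/R_n$ and the divisor bound, and control each class group by the square root of the discriminant. Your only deviation is a minor (and arguably cleaner) one: you bound each $h(\cO)$ directly by $\sqrt{|\disc(\cO)|}\le\sqrt{|\disc(R_n)|}$, whereas the paper instead asserts that $h(R_n)$ dominates the class numbers of all its over-orders.
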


\begin{proof}
From Lemma \ref{lemmagor} we deduce that every order between $\cO_{F}[\alpha^{n}]$ and $\cO_{K}$ is Gorenstein, hence every element of $\operatorname{ICM}(R_{n})$ lies in $\mathcal{C}\ell(\cO)$ for some  unique order $\cO$ over $R_{n}$. Therefore by Lemma \ref{bass} we have 	
$$\bigsqcup_{R_{n} \subset \cO \subset \cO_{K} }\mathcal{C}\ell(\mathcal{O}) = \operatorname{ICM}(R_{n}).$$

Let $h(\cO)$ be the class number of $\cO$. We note that the class group of the smallest order $R_{n}$ has the largest size. By the lattice isomorphism theorem, subgroups of $\cO_K$ containing $R_{n}$ are in bijection with subgroups of $\cO_{K}/R_{n}$. Since $R_n$ is Bass, we have that $\cO_{K}/R_{n}$ is cyclic. Therefore its subgroups are indexed by $d\mid i_n$ where $i_n =|\cO_K/R_n|.$ Putting everything together, we see that 
$$N(A, q^{n})=\sum_{d \mid i_{n}}h(\cO_{d})\le i_{n}^{o(1)}h(R_{n})=(q^{\frac{n}{2}})^{(g-a+o(1))}$$
where $\cO_{d}$ is the order corresponding to the subgroup of order $d$ in $\cO_{K}/R_n$.
\end{proof}
\subsection{The proof of Theorem \ref{isogsize}}

Let $R_{n}^{+}=R_{n} \cap L$ be the totally real part of $R_{n}$. Since $R_n$ contains $\cO_L,$ we have $R_n^+=\cO_L.$
\begin{prop}\label{norm map}
    The subset $I(A, q^{n})$ with endomorphism ring equal to $R_{n}$ is either empty, or admits a bijective map if $\mathcal{C}_{h}$ is totally ramified, or admits a $2^{k}$-to-one map if there are $k$ inert factors,  onto the kernel of the norm map
    $$N: \mathcal{C}\ell(R_{n}) \to \mathcal{C}\ell^{+}(R_{n}^{+}),$$
    where $\mathcal{C}\ell^{+}(R_{n}^{+})$ is the narrow class group of the totally real order $R_{n}^{+}$.
\end{prop}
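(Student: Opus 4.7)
The plan is to convert each $B \in I(A,q^n)$ with $\End(B) = R_n$ into an ideal class of $R_n$ via Theorem \ref{mainthm}, to read off the polarization as the condition that this class lies in $\ker N$, and then to count fibers using the partition of $\mathcal{C}_h$ into subcategories from Section \ref{3}.

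The first step would be to use the equivalences $\mathcal{F}_i:\mathcal{C}_{i,h} \isomto \mathcal{L}_h$ to attach to $B$ a polarized Deligne module $(T,F,\lambda)$ with $\End_{\cO_L}(T,F)=R_n$. Since $T\otimes \Q \simeq K$ is one-dimensional over $K$, the lattice $T$ may be identified with a fractional $R_n$-ideal $I \subset K$ whose multiplicator ring is $R_n$; by Corollary \ref{gor} this ideal is invertible, producing a class $[I]\in\mathcal{C}\ell(R_n)$. Next, writing the Riemann form attached to $\lambda$ as $(x,y)\mapsto \Tr_{K/\Q}(\xi\,\bar{x}\,y)$ with $\bar\xi=-\xi$, I would check that $\lambda$ being a $\mathcal{D}_{L/\Q}^{-1}$-polarization (rather than a general $\mathfrak{a}$-polarization) translates, via the standard dictionary (cf.\ \cite{Goren}), into the identity $\xi\,I\bar{I}\,\mathcal{D}_{L/\Q} = \cO_L$ together with totally imaginary $\xi$ such that $\xi/\sqrt{-1}$ is totally positive. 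After an appropriate normalization of $N$, this is exactly the condition that $[I] \in \ker N$.

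For the reverse direction, given any $[J] \in \ker N$, one chooses a representative with $J\bar J$ principal and totally positively generated; this furnishes a polarized Deligne module with multiplicator ring $R_n$, and by Proposition \ref{surjfuct} an abelian variety $B\in \mathcal{C}_h$ with $\End(B)=R_n$. For the fiber count, I would invoke Theorem \ref{mainthm}: in the totally ramified case, a single $\mathcal{F}_i$ is an equivalence $\mathcal{C}_h \isomto \mathcal{L}_h$ once the lift is fixed, so $B\mapsto [I]$ is a bijection onto $\ker N$; if $\mathcal{C}_h$ has $k$ inert supersingular factors, the partition into $2^k$ full subcategories each equivalent to $\mathcal{L}_h$ shows that any given Deligne module has exactly one preimage in each subcategory, yielding the claimed $2^k$-to-one map. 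The ``either empty'' alternative handles the case where no invertible $R_n$-ideal admits such a polarization datum.

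The main obstacle will be the polarization translation: one has to make precise the bijection between $\cO_L$-linear symmetric isomorphisms $B\otimes_{\cO_L} \mathfrak{a} \isomto B^\vee$ for $\mathfrak{a} = \mathcal{D}_{L/\Q}^{-1}$ and pairs $(I,\xi)$ satisfying $\xi I\bar I \mathcal{D}_{L/\Q} = \cO_L$ with the required positivity. The subtle piece is total positivity, which forces the target to be the narrow class group, and one must also verify that passing from $\cO_K$ to the non-maximal order $R_n$ does not disturb this ideal-theoretic identity — this uses $R_n^+ = \cO_L$ in an essential way.
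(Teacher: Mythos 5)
Your proposal is correct and takes essentially the same approach as the paper: the paper's proof of this proposition is the single line that it follows verbatim from \cite[Proposition 3.5]{ST18}, and what you have written out --- identifying the Deligne module of $B$ with an invertible fractional $R_n$-ideal via Theorem \ref{mainthm} and Corollary \ref{gor}, translating the $\mathcal{D}_{L/\Q}^{-1}$-polarization into membership of its class in $\ker N$ through a totally imaginary $\xi$ with the positivity constraint, and reading the fiber size off the $2^k$ subcategories of the classification --- is precisely that argument. The only caveat is the one you already flag yourself: the ideal-theoretic identity should be phrased with the relative norm $N_{K/L}(I)$ and the relevant different, but this is bookkeeping rather than a gap.
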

\begin{proof}
    The proof follows verbatim as the proof of \cite[Proposition 3.5]{ST18}.
\end{proof}

One last thing to finish the proof of Theorem \ref{isogsize} is to prove that there exists some principally polarized abelian varieties with endomorphism ring equal to $R_{n}$.

\vspace{1em}
Let $\mathfrak{a}$, $\mathfrak{b}$ be fractional ideals of $\mathcal{O}_{L}$ and $z \in \mathcal{H}^{+}$ . For a lattice $\Lambda_{z}$ given by 
$$\Lambda_{z}=\mathfrak{a} \cdot z \oplus \mathfrak{b} \cdot 1,$$
recall that a polarization on $A_{z}=\mathbb{C}^{g}/\Lambda_{z}$ is given by a Riemann form on $\mathfrak{a} \oplus \mathfrak{b}$:
$$E_{r,z}((x_{1},y_{1}),(x_{2},y_{2})):= \operatorname{Tr}_{L/\Q}(r(x_{1}y_{2}-x_{2}y_{1}))$$
for some $r \in (\mathcal{D}_{L/\Q}\mathfrak{a}\mathfrak{b})^{-1}$. Therefore we have 
$$\Hom_{\mathcal{O}_{L}}(A_{z}, A_{z}^{\vee})^{symm}=(\mathcal{D}_{L/\Q}\mathfrak{a}\mathfrak{b})^{-1}$$ and the set of polarization is identified with elements in $(\mathcal{D}_{L/\Q}\mathfrak{a}\mathfrak{b})^{-1,+}$. Moreover, if this holds, then the degree of the polarization on $A_{z}$ is given by $E_{r,z}$ is Norm$(r\mathcal{D}_{L/\Q}\mathfrak{a}\mathfrak{b})$. In particular, there exists an $r$ such that $E_{r,z}$ is principal if and only if $\mathfrak{a}\mathfrak{b}=\mathcal{D}_{L/\Q}^{-1}$ in the narrow class group $\mathcal{C}\ell(L)^{+}$. In other words, we may take $\mathfrak{a} \oplus \mathfrak{b}=\mathcal{D}^{-1}_{L/\Q} \oplus \mathcal{O}_{L}.$ \cite[Chapter 2; Corollary 2.10]{Goren}.
\begin{prop}\label{polarizationRn}
    For every positive integer $n$, there exists some principally polarized abelian varieties with endomorphism ring equal to $R_{n}$.
\end{prop}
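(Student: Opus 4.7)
The plan is to use the Deligne module classification of Theorem~\ref{mainthm} and produce an explicit principally polarized object of $\mathcal{L}_h$ with endomorphism ring $R_n$. By Proposition~\ref{norm map}, the set $I(A,q^n)$ with endomorphism ring $R_n$ is either empty or surjects onto $\ker(N\colon \mathcal{C}\ell(R_n)\to \mathcal{C}\ell^{+}(\cO_L))$, which always contains the identity class. So it suffices to exhibit one principally polarized $R_n$-Deligne module, namely to find an invertible $R_n$-ideal $I\subset K$ together with a totally imaginary $\xi\in K$ (i.e.\ $\bar\xi=-\xi$) such that the Riemann form $E_\xi(x,y)=\Tr_{K/\Q}(\xi x\bar y)$ is $\Z$-valued and unimodular on $I$, while the induced map $\mathcal{D}_{L/\Q}^{-1}\isomto \Hom^{\Sym}_{\cO_L}(I,I^{\vee})$ matches the Hilbert moduli structure.

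The next step is the explicit construction. Using the tower formula $\mathcal{D}_{R_n/\Z}=\mathcal{D}_{R_n/\cO_L}\cdot\mathcal{D}_{\cO_L/\Z}$ and the fact that $R_n$ is Gorenstein (Corollary~\ref{gor}), the relative different $\mathcal{D}_{R_n/\cO_L}$ is an invertible $R_n$-ideal whose local generator at each prime $\p$ of $\cO_L$ is a totally imaginary element of the shape $\alpha_\p-\bar\alpha_\p$. The condition that $(I,E_\xi)$ gives a principal $\mathcal{D}_{L/\Q}^{-1}$-polarization becomes $\xi I\bar I=\mathcal{D}_{R_n/\cO_L}^{-1}$, which, combined with the analytic description recalled just before the proposition (principality on the lattice $\mathfrak{a}\oplus\mathfrak{b}$ requires $\mathfrak{a}\mathfrak{b}=\mathcal{D}_{L/\Q}^{-1}$ in the narrow class group), translates into a Steinitz-class condition on $I$ viewed as an $\cO_L$-module. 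The plan is then to pick $I$ in the correct coset of $N_{K/L}(\mathcal{C}\ell(R_n))$ inside $\mathcal{C}\ell^{+}(\cO_L)$, and to choose the sign of $\xi$ so that $\Img\sigma(\xi)>0$ for every $\sigma$ in the CM type $\Phi$ attached to a fixed canonical lift $\widetilde{A}$, guaranteeing positivity of $E_\xi$ on the Hodge decomposition.

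The main obstacle will be producing $I$ with the right Steinitz class: equivalently, showing that $\mathcal{D}_{L/\Q}^{-1}$ lies in the image of the map $\mathcal{C}\ell(R_n)\to \mathcal{C}\ell^{+}(\cO_L)$ sending $[I]$ to $N_{K/L}(I)$ times the Steinitz class of $R_n$. This ought to follow from the Gorenstein description of $R_n$ together with the tower formula, since locally at each prime of $\cO_L$ the skew part of $\mathcal{D}_{R_n/\cO_L}^{-1}$ has valuations matching those of $\mathcal{D}_{L/\Q}^{-1}$, and Gorenstein-ness lets one patch the local totally imaginary generators into a global $\xi$. Once this Steinitz-class identification is established, the data $(I,\alpha^n\cdot,E_\xi)$ lies in $\mathcal{L}_h$ with the required principal $\mathcal{D}_{L/\Q}^{-1}$-polarization, and Theorem~\ref{mainthm} transports it to a principally polarized abelian variety in $\mathcal{H}_{g,1}(\F_{q^n})$ with endomorphism ring exactly $R_n$, finishing the proof.
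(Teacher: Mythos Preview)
Your proposal is an outline rather than a proof. The decisive step---actually producing an ideal $I$ and a global totally imaginary $\xi$ satisfying the ideal equation---is deferred to an ``ought to follow from Gorenstein-ness together with the tower formula'' that you never execute; the Steinitz-class obstruction you raise is identified but not shown to vanish. There is also a small technical slip: for $\Tr_{K/\Q}(\xi x\bar y)$ to be a perfect $\Z$-valued pairing on $I$ one needs $\xi I\bar I=R_n^{\vee}$, which by the tower formula is $\mathcal{D}_{R_n/\cO_L}^{-1}\mathcal{D}_{L/\Q}^{-1}$, not $\mathcal{D}_{R_n/\cO_L}^{-1}$ alone. Treating the $\mathcal{D}_{L/\Q}^{-1}$ factor as a separate ``Steinitz-class condition'' on $I$ obscures rather than clarifies the argument.

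The paper's proof is shorter and avoids the class-group detour entirely by taking $I=R_n$ itself. Since $R_n\supset\cO_L$ and $K/L$ is imaginary quadratic, one writes $R_n=\cO_L[bi]$ for a totally imaginary $i$ and a positive integer $b$, and then reads off the trace dual directly: $R_n^{\vee}=2bi\cdot(\cO_L)^{\vee}=2bi\cdot\mathcal{D}_{L/\Q}^{-1}$. One then sets $\lambda_n=2bci$ with $c\in\mathcal{D}_{L/\Q}^{-1}$ chosen so that $\varphi(2bci)/i$ is totally positive for every $\varphi$ in the fixed CM type $\Phi$, and the form $(x,y)\mapsto\Tr_{K/\Q}(\lambda_n x\bar y)$ is the desired principal Riemann form on $R_n$. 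Your tower-formula computation is essentially this same identity, but by allowing a general $I$ you manufactured an obstruction that you then had to dispose of; choosing $I=R_n$ from the outset makes it trivial, because $R_n^{\vee}$ is visibly generated over $\mathcal{D}_{L/\Q}^{-1}$ by the totally imaginary element $2bi$, so only the sign/positivity choice of $c$ remains.
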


\begin{proof}
  Let $\mathfrak{i}$ be a fractional ideal of $\mathcal{O}_{L}$. A polarization on the abelian variety corresponding to $R_{n}$ can be realized as a $\Z$-valued bilinear form on $R_{n}$, which is of the form:
$$ \langle x,y \rangle \mapsto \operatorname{Trace}_{K/\Q}(ai x \bar{y}),$$
where,
\begin{itemize}
    \item[(1)] $\bar{y}$ is the complex conjugation of $y$.
     \item[(2)] $ \langle -,- \rangle$ restricted to $\mathfrak{a}$ is integral.
     \item[(3)] $a$ is totally positive element in $\mathfrak{i}$.
     \end{itemize}
In order to construct a principally polarized abelian variety, the argument above requires that  $\mathfrak{i}=\mathcal{D}_{L/\Q}^{-1}.$
Let $R_{n}^{\vee}$ be the dual of $R_{n}$ under the bilinear form induced by the trace map $(x,y) \mapsto \operatorname{Trace}_{K/\Q}(x,y).$ Let $R_{n}^{+}=R_{n} \cap L$ be the totally real part of $R_{n}$. We know that $R_n^+=\cO_L.$ Consider $R_{n}$ as a imaginary extension of degree $2$ over $\cO_L$. Therefore $R_{n}^{\vee}=2bi(\cO_L)^{\vee}$ for some positive integer $b$. Now, $(\cO_L)^{\vee} =  \mathcal{D}_{L/\Q}^{-1}$. Consider the bilinear form on $R_{n}$ given by 
$$(x,y) \mapsto \operatorname{Trace}_{K/\Q}(\lambda_{n}x\bar{y}),$$
where $\lambda_{n} \in R_{n}^{\vee}.$ Therefore, we can get a totally positive $\lambda_{n}$ once we choose an element $c$ from $\mathcal{D}_{L/\Q}^{-1}$ such that $\varphi(2bci)/i$ is totally positive for all $\varphi \in \Phi$. This gives a principal polarization on the abelian variety with endomorphism ring equal to $R_{n}$. 
\end{proof}

\begin{proof}[Proof of Theorem \ref{isogsize}]
    The theorem now follows from Lemma \ref{lowerboundclassgroup}, Proposition \ref{upbound}, Proposition \ref{norm map}, Proposition \ref{polarizationRn}, and the fact that the size of $ \mathcal{C}\ell^{+}(R_{n}^{+})$ does not grow with $n$ in our case because $R_{n}^{+} = \cO_L.$
\end{proof}

\bibliography{reference}
\end{document}